\let\origvec\vec
\let\vec\origvec
 \numberwithin{equation}{section}
\journalname{myjournal}
\begin{document}
\title{Periodic travelling interfacial electrohydrodynamic waves: bifurcation and secondary bifurcation}


\author{Guowei Dai \and Fei Xu \and Yong Zhang
}


\institute{Guowei Dai \at
              School of Mathematical Sciences, Dalian University of Technology, Dalian, 116024, People's Republic of China \\
              \email{daiguowei@dlut.edu.cn.}           
              \and
            Fei Xu \at
             School of Mathematical Sciences, Jiangsu University, Zhenjiang 212013, People's Republic of China \\
              \email{xufeiujs@126.com}
        \and
              Yong Zhang (Corresponding author) \at
             Institute of Applied System Analysis, Jiangsu University, Zhenjiang 212013, People's Republic of China \\
              \email{zhangyong@ujs.edu.cn}
}

\date{Received: date / Accepted: date}

\maketitle

\begin{abstract}
In this paper, two-dimensional periodic capillary-gravity waves travelling under the effect of a vertical electric field are considered. The full system is a nonlinear, two-layered and free boundary problem. The interface dynamics arises from the coupling between the Euler equations for the lower fluid layer and an electric contribution from the upper gas layer. To investigate the electrohydrodynamic wave interactions, we first introduce the naive flattening technique to transform the free boundary problem into a fixed boundary problem. Then we prove the existence of the small-amplitude electrohydrodynamic waves with constant vorticity $\gamma$ by using local bifurcation theory. Moreover, we prove that these electrohydrodynamic waves are formally stable in linearized sense. Furthermore, we obtain a secondary bifurcation curve that emerges from the primary branch at a nonlaminar solution as $E_0$ being close to some special value. This secondary bifurcation curve consists of ripples solutions on the interface of a conducting fluid under normal electric fields. As far as we know, this new phenomenon in electrohydrodynamics (EHD) is first established mathematically. It is worth noting that the electric field $E_0$ plays a key role to control the shapes and types of waves on the interface.
\end{abstract}

\subclass{76B15, 35Q35, 76B03.}

\keywords{Electrohydrodynamics, Capillary-gravity waves, Bifurcation, Stability, Ripples}

\section{\bf Introduction}
Water waves propagating on the interface between two fluids have been studied intensively via using either analytical or numerical methods. Many different mathematical methods have been introduced to study the steady or time-dependent solutions both in shallow and deep waters \cite{Vanden}. These waves are usually created by the presence of two different layers of different density combined with a certain configuration of current. As far as we know, the first small-amplitude interfacial periodic traveling capillary-gravity waves on finite depth was constructed by Amick and Turner \cite{AmickT}, where the vorticity of fluid was ignored. When vorticity is included, it was unclear for long time how to leave the regime of small perturbations of configurations with a flat surface. The breakthrough in this direction is due to Constantin and Strauss \cite{Constantin,ConstantinS}, who utilised the semi-hodograph transformation of Dubreil-Jacotin \cite{DubreilJ}. Based on this, the work \cite{WalshOS} allowed for the vorticity in the interfacial wind waves without considering stagnation pionts. Recently, Ambrose, Strauss, and Wright \cite{AmbroseSW} considered the new coordinates and applied the Rabinowitz's global bifurcation theorem to prove the existence of pure capillary interfacial waves and gravity-capillary interfacial waves. We refer to \cite{ChuDE,Sinambela,Wheeler} for more information on two-layer or many-layer density stratified water waves. We also would like to mention the work \cite{JonesT85,MartinM}, where they studied a pseudodifferential equation by finding the secondary bifurcation branches emerging from the primary curves for capillary-gravity water waves without or with vorticity.

In the presence of electric fields, this topic is called Electrohydrodynamics (EHD), which has recently attracted much attention.
It is easy to artificially manipulate considerably strong electric fields with modern engineering techniques to cause significant changes in fluid motion, usually manifested via modifications of the gas-liquid interface dynamics. As a result, EHD enjoys wide industrial applications in chemical engineering, e.g., coating processes, cooling systems of conducting fluids, electrospray technology, etc. (see \cite{MelcherT,Papageorgiou} for a comprehensive overview). Due to the important role that EHD plays in the engineering community, an in-depth understanding of the mathematics behind the scene remains essential. As far as we know, the travelling waves propagating in two-dimensional space under the effects of gravity, surface tension, and electric field have been studied intensively by different authors, either with weakly nonlinear models \cite{Gleeson,HuntD,Wang} or with numerical methods \cite{Doak,DoakG,Gao,LinZW}. However, there have been, so far, few studies fully nonlinear water waves with vorticity under the influence of electric fields by using the analytical method except the work \cite{Smit}, where authors considered singularities in the Electrohydrodynamic equations with ignoring surface tension for simplicity.

In this work, we investigate the resonances mentioned above in the electrified Euler equation with surface tension by using the bifurcation theory. The novelty here is however that we remove the assumption of irrotationality in the case of electrohydrodynamics with an assumption of globally constant vorticity in the lower fluid. The other contribution of the paper is to reveal the effect of electric field $E_0$ on the appearance of different types of electrohydrodynamic waves. Especially, we prove that there would be periodic electrohydrodynamic waves if (\ref{eq3.16}) holds and establish the existence of ripples for the first time on the interface as $E_0$ attains some value, which may be useful in physical and industrial applications. Our results are true for electrohydrodynamic capillary as well. A very delicate issue is to reformulate the problem such that it becomes amenable to a certain local bifurcation theorem in order to construct these waves of small amplitude. To this end and what may seem surprisingly, the flattening transform plays a key role. It is known that this transformation can admit the occurrence of stagnation points and critical layers inside fluid \cite{EhrnstromEW,HenryBM,Wahlen09}. However, this is an interesting and important question but beyond the scope of the preliminary investigations here.

The rest of the paper is structured as follows. The problem is mathematically formulated in Section 2. We prove the existence and stability of electrohydrodynamic periodic capillary-gravity waves by using the celebrated Crandall-Rabinowitz theorems \cite{CrandallR,CrandallR1} in Section 3. Finally, the ripples are shown by applying the secondary bifurcation theorem \cite{Shearer} in Section 4.

\section{\bf Preliminaries}
\subsection{\bf Introduction of the problem}
We consider a two-dimensional steady, inviscid, incompressible, and perfectly conducting fluid of constant density $(\rho=1)$ and finite depth
bounded below by a flat electrode. We first introduce a Cartesian coordinates $(x,y)$ such that $x$-axis points to the horizontal and $y$-axis points to the vertical. Assume that the flat electrode is given by $y=-d$ with $d>0$, the free interface is given by $y=\eta(x)$ and find the function $\psi(x,y)$ called the stream function, providing the velocity field $(\psi_y, -\psi_x)$ of the fluid, which satisfies the following equations and boundary conditions:
\begin{eqnarray}
\left\{\begin{array}{llll}{\Delta\psi=\gamma} & {\text { for }-d < y < \eta(x)}, \\
{\psi=0} & {\text { on } ~y= \eta(x)}, \\
{\psi=m} & { \text { on }~  y =-d},\end{array}\right. \label{eq2.1}
\end{eqnarray}
where $\gamma$ is the constant vorticity meaning that the flow is to be rotational and the constant $m$ is the relative
mass flux defined by $m=\int_{-d}^{\eta(x)}-\psi_y \,dy$.

In addition, we assume that the fluid is perfectly conducting so that the electric strength is zero within the fluid. The surrounding medium,
which occupies the region above the liquid, is assumed to be dielectric with permittivity $\epsilon_0$. Its density is very small and negligible in comparison to that of the conducting fluid. The upper layer is also bounded by a  flat electrode $y =d$. The electrostatic limit of Maxwell's equation implies that the induced magnetic fields are negligible, and it then follows that the electric field is irrotational due to Faraday's law. Therefore, we can introduce the voltage potential $V$, such that the vertical electric field above the liquid surface satisfies $\mathbf{E}=\nabla V$ with $\mathbf{E}=(0,E_0)$ near the above electrode $y=d$ for some constant $E_0$, and hence the voltage potential satisfies
\begin{equation}\label{eq2.2}
\Delta V=0\quad \text{for} \quad \eta(x)<y<d.
\end{equation}
Due to the conducting nature of the fluid, the voltage potential $V$ is invariant in the fluid domain and chosen to be zero without loss of
generality. It is easy to see that $V=V_0$ at the top electrode, where $V_0=E_0 d$ is a constant. Then the boundary conditions for $V$ are imposed by
\begin{eqnarray}
\left\{\begin{array}{llll}{V=0} & {\text { for } y = \eta(x)}, \\
{V=V_0} & { \text { for } ~ y=d}. \end{array}\right. \label{eq2.3}
\end{eqnarray}
Although the voltage potential vanishes on the free surface, the normal component of the electric field gives rise to normal stress. Therefore, the electric field and the fluid motion are coupled through the Maxwell stress tensor leading to the following modified Bernoulli law
\begin{eqnarray}
|\nabla\psi|^2+2g\eta-\epsilon_0|\nabla V|^2-2\sigma\frac{\eta_{xx}}{\left(1+\eta_x^2\right)^\frac{3}{2}}=Q~~\text{on}~~y=\eta(x), \label{eq2.4}
\end{eqnarray}
where $g$ is the gravitational constant of acceleration, $\sigma$ is the coefficient of interface tension and $Q$ is the Bernoulli's constant. A schematic of the problem is presented in Fig. \ref{fig1}.
\begin{figure}[ht]
\centering
\includegraphics[width=0.75\textwidth]{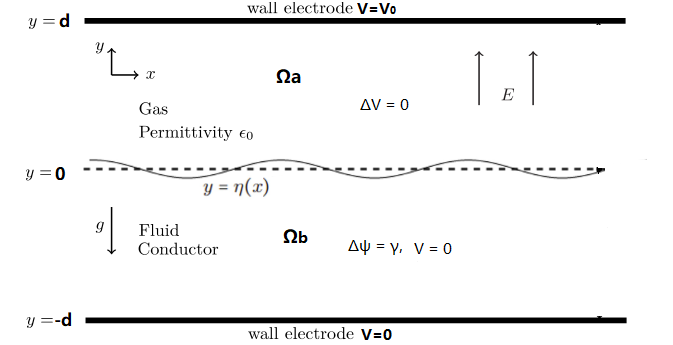}
\caption{The schematic of the problem.}
\label{fig1}
\end{figure}

As a first step, we introduce a constant $\lambda$ into the problem which will allow us to describe
the trivial solutions of (\ref{eq2.1})-(\ref{eq2.4}), that is to say, the solutions with a flat wave profile located at $y=0$.
Later on, we use this constant as a bifurcation parameter to find nontrivial solutions of (\ref{eq2.1})-(\ref{eq2.4}).
To this end, when $\eta(x)=0$ and $(V,\psi)$ is independent of $x$, we observe that the functions $(\overline{V},\overline{\psi})$ solving (\ref{eq2.1})-(\ref{eq2.4}) are  given by
\begin{eqnarray}
(\overline{V},\overline{\psi})=(E_0 y, \frac{\gamma y^2}{2}+\lambda y) \label{eq2.5}
\end{eqnarray}
provided the constants $m$ and $Q$ satisfy
$$
m=m(\lambda)=\frac{\gamma d^2}{2}-\lambda d,\quad Q=Q(\lambda)=\lambda^2-\epsilon_0 E_{0}^2.
$$
The functions $(\overline{V},\overline{\psi})$ obtained in (\ref{eq2.5}) will be taken as the trivial solutions with velocity field $(\overline{\psi}_y, -\overline{\psi}_x)=(\gamma y+\lambda,0)$. Thus, the bifurcation parameter $\lambda$ represents physically the horizontal wave speed of the electrohydrodynamic waves on the interface $y=0$.
\subsection{\bf Reformulation via the naive flattening transform}

The main difficulties associated with the system (\ref{eq2.1})-(\ref{eq2.4}) are its nonlinear character and the fact that the interface $y=\eta(x)$ is unknown. The latter difficulty can be overcome by introducing a suitable naive flattening transform. Let's first define the unknown domains by
$$
\Omega_{a}:=\{ (x,y)\in \mathbb{R}^2: \eta(x)<y<d \}, \quad \Omega_{b}:=\{ (x,y)\in \mathbb{R}^2: -d<y<\eta(x) \}
$$
and the following horizontal strips by
$$
D_{a}:=\{ (q,p)\in \mathbb{R}^2: 0<p<d \}, \quad D_{b}:=\{ (q,p)\in \mathbb{R}^2: -d<p<0 \}
$$
The main idea is to find a transform to map the unknown domains $\Omega_a$ and $\Omega_b$ into the strips $D_a$ and $D_b$.
\begin{figure}[ht]
\centering
\includegraphics[width=0.75\textwidth]{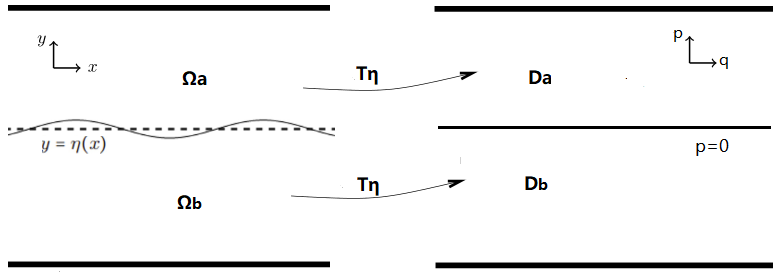}
\caption{The flatness of the fluid domains.}
\label{fig2}
\end{figure}

Here we introduce the naive flatting transform by
\begin{eqnarray}
T_{\eta}: (x,y)\mapsto(q,p)=\left\{\begin{array}{llll}{\left(x, y-\frac{\eta(x)(d-y)}{d-\eta(x)}\right)} & {\text { for }~\eta(x) \leq y < d}, \\
~\\
{\left(x, y-\frac{\eta(x)(d+y)}{d+\eta(x)}\right)} & { \text { for }~-d< y \leq \eta(x)}.\end{array}\right. \label{eq2.6}
\end{eqnarray}
Assume $|\eta(x)|<d$, it is easy to check that the naive flatting transform $T_{\eta}$ is a diffeomorphism and maps $\Omega_a$ and $\Omega_b$ into $D_a$ and $D_b$ as in Fig \ref{fig2}.

In the coordinates $(q,p)$, then the equation (\ref{eq2.1}) of $\psi$ would become
\begin{equation}\label{eq2.7}
\left\{\begin{array}{llll}
{\psi_{qq}-\frac{2(d+p)\eta'}{d+\eta}\psi_{qp}+\frac{(d+p)^2\eta'^2+d^2}{(d+\eta)^2}\psi_{pp}-\frac{(d+p)\left((d+\eta)\eta''-2\eta'^2\right)}{(d+\eta)^2}\psi_p=\gamma} &~{p\in(-d,0)},\\
{\psi=0} & ~\text{on}~~{p=0},\\
{\psi=m} &~\text{on}~~{p=-d},\\
\end{array}\right.
\end{equation}
the equations (\ref{eq2.2})-(\ref{eq2.3}) of $V$ would be
\begin{equation}\label{eq2.8}
\left\{\begin{array}{llll}
{V_{qq}-\frac{2(d-p)\eta'}{d-\eta}V_{qp}+\frac{(d-p)^2\eta'^2+d^2}{(d-\eta)^2}V_{pp}-\frac{(d-p)\left((d-\eta)\eta''+2\eta'^2\right)}{(d-\eta)^2}V_p=0} &~{p\in(0,d)},\\
{V=0} & ~\text{on}~~{p=0},\\
{V=E_0d} &~\text{on}~~{p=d}\\
\end{array}\right.
\end{equation}
and the equation (\ref{eq2.4}) on the interface will take the form
\begin{eqnarray}\label{eq2.9}
& & \psi_q^2+\frac{d^2(\eta'^2+1)}{(d+\eta)^2}\psi_p^2-\frac{2d\eta'}{d+\eta}\psi_q\psi_p+2g\eta-2\sigma\frac{\eta''}{\left(1+\eta'^2\right)^\frac{3}{2}}\nonumber\\
& &-\epsilon_0\left(V_q^2+\frac{d^2(\eta'^2+1)}{(d-\eta)^2}V_p^2-\frac{2d\eta'}{d-\eta}V_qV_p\right)-Q=0~~\text{on}~~p=0.
\end{eqnarray}



\section{The dynamics of periodic electrohydrodynamic waves}
In this section, we aim to construct the nontrivial solutions of (\ref{eq2.7})-(\ref{eq2.9}) by applying bifurcation theory. Thus
it is necessary to transform the problem (\ref{eq2.7})-(\ref{eq2.9}) into a suitable abstract operator equation.

We first introduce a functional analytic setup. Without loss of generality, here we choose the constant $d=1$. Define the function space for an arbitrary $\alpha\in (0,1)$ by
 $$X:=\{\eta\in C^{2,\alpha}_{per,e,0}(\mathbb{R}): |\eta|<1\},$$
where $C^{k,\alpha}_{per}$ denotes the space of functions in  H\"{o}lder class with the variable $q$
being periodic, even and having zero average. Similarly, we introduce the space
$$Y:=\{\eta\in C^{0,\alpha}_{per,e,0}(\mathbb{R}):  |\eta|<1\},$$

It follows from \cite[Theorem 6.14]{Gilbarg} that there exist a unique solution $\psi\in C^{2,\alpha}_{per,e}(\overline{D_b})$ to the problem (\ref{eq2.7}) and a unique solution $V\in C^{2,\alpha}_{per,e}(\overline{D_a})$ to the problem (\ref{eq2.8}) for any $\eta\in X$. Therefore, for a fixed $\eta$, we have that
$$
V:=V(E_0, \eta),\quad \psi:=\psi(\lambda,\eta).
$$
In addition, $V(E_0,\eta)$ and $\psi(\lambda,\eta)$ are differential with respect to $\eta$ by the implicit function theorem. In fact, it follows \cite[Lemma 4.1]{HenryBM} that $V\in C^2(\mathbb{R}\times X, C^{2,\alpha}_{per,e}(\overline{D_a}))$ and $\psi\in C^2(\mathbb{R}\times X, C^{2,\alpha}_{per,e}(\overline{D_b}))$. Taking $V(E_0,\eta)$ and $\psi(\lambda,\eta)$ into the equation (\ref{eq2.9}) on the interface allows us to define a new abstract equation as our bifurcation equation by
\begin{eqnarray} \label{eq3.1}
F(\lambda,E_0,\eta): & & =\psi_q^2+\frac{\eta'^2+1}{(1+\eta)^2}\psi_p^2-\frac{2\eta'}{1+\eta}\psi_q\psi_p+2g\eta-2\sigma\frac{\eta''}{\left(1+\eta'^2\right)^\frac{3}{2}}\nonumber\\
& &-\epsilon_0\left(V_q^2+\frac{\eta'^2+1}{(1-\eta)^2}V_p^2-\frac{2\eta'}{1-\eta}V_qV_p\right)\bigg|_{p=0}-Q=0,
\end{eqnarray}
where $F: \mathbb{R}^2\times X\rightarrow Y$ and $V=V(E_0,\eta), \psi=\psi(\lambda,\eta)$.

\subsection{\bf Spectrum of the linearized operator as a Fourier multiplier }
In order to prove the occurrence of local bifuraction, we will deduce spectral properties of the linearized operator $\partial_{\eta}F(\lambda, E_0, 0)$. Indeed, we can prove that $\partial_{\eta}F(\lambda, E_0, 0)$ is a Fourier multiplier, which allows us to analyse its Fredholm properties by examining the behaviour of its symbols $D(\lambda, E_0)$. Recalling that $V(E_0,0)=\overline{V}$ and $\psi(\lambda,0)=\overline{\psi}$ where $\overline{V}$ and $\overline{\psi}$ are given by (\ref{eq2.5}), we have that
\begin{eqnarray} \label{eq3.2}
F_{\eta}(\lambda,E_0,0)[\eta]& & =
\partial_{\eta}F(\lambda,E_0,0)[\eta]+\partial_{\psi}F(\lambda,E_0,0)[\partial_{\eta}\psi(\lambda,0)[\eta]]\nonumber\\
& &+\partial_{V}F(\lambda,E_0,0)[\partial_{\eta}V(\lambda,0)[\eta]]\bigg|_{p=0},
\end{eqnarray}
for all $\eta\in X$. Define
\begin{eqnarray}\label{eq3.3}
w(q,p):=\partial_{\eta}\psi(\lambda,0)[\eta],\qquad h(q,p):=\partial_{\eta}V(\lambda,0)[\eta].
\end{eqnarray}
It's easy to see that
\begin{eqnarray}\label{eq3.4}
\partial_{\eta}F(\lambda,E_0,0)[\eta]=-2\overline{\psi}_{p}^2\eta+2g\eta-2\sigma\eta''-2\epsilon_0\overline{V}_{p}^2\eta
\end{eqnarray}
and
\begin{eqnarray}\label{eq3.5}
\partial_{\psi}F(\lambda,E_0,0)[w]=2\overline{\psi}_{p}w_p,\qquad \partial_{V}F(\lambda,E_0,0)[h]=-2\epsilon_0\overline{V}_{p}h_p.
\end{eqnarray}

On the other hand, it follows from (\ref{eq2.5}) that
$$
\overline{V}=E_0 y=E_0(p(1-\eta)+\eta )
$$
and
 $$
\overline{\psi}=\frac{\gamma y^2}{2}+\lambda y=\frac{\gamma}{2}\left(p(1+\eta)+\eta\right)^2+\lambda\left(p(1+\eta)+\eta\right)
$$
Thus, (\ref{eq3.4}) and (\ref{eq3.5}) would become that
\begin{eqnarray}\label{eq3.6}
\partial_{\eta}F(\lambda,E_0,0)[\eta]=-2\lambda^2\eta+2g\eta-2\sigma\eta''-2\epsilon_0E_{0}^2\eta
\end{eqnarray}
and
\begin{eqnarray}\label{eq3.7}
\partial_{\psi}F(\lambda,E_0,0)[w]=2\lambda w_p,\qquad \partial_{V}F(\lambda,E_0,0)[h]=-2\epsilon_0E_0h_p.
\end{eqnarray}

Based on (\ref{eq3.2})-(\ref{eq3.3}) and (\ref{eq3.6})-(\ref{eq3.7}), it remains for us to determine $w_{p}(q,0)$ and $h_{p}(q,0)$. By observation, differentiating the equations of (\ref{eq2.7}) with respect to $\eta$, then $w=\partial_{\eta}\psi(\lambda,0)[\eta]$ is the solution of the following problem
\begin{equation}\label{eq3.8}
\left\{\begin{array}{llll}
{\Delta w=2\overline{\psi}_{pp}\eta+(1+p)\overline{\psi}_{p}\eta''=2\gamma\eta+(1+p)(\gamma p+\lambda)\eta''} &~{p\in(-1,0)},\\
{w=0} & ~\text{on}~~{p=0},\\
{w=0} &~\text{on}~~{p=-1}.\\
\end{array}\right.
\end{equation}
Similarly, differentiating the equations of (\ref{eq2.8}) with respect to $\eta$, then we have that $h=\partial_{\eta}V(E_0,0)[\eta]$ is the solution of the following problem
\begin{equation}\label{eq3.9}
\left\{\begin{array}{llll}
{\Delta h=-2\overline{V}_{pp}\eta+(1-p)\overline{V}_{p}\eta''=(1-p)E_0\eta''} &~{p\in(0,1)},\\
{h=0} & ~\text{on}~~{p=0},\\
{h=0} &~\text{on}~~{p=1}.\\
\end{array}\right.
\end{equation}

Since $\eta\in X$ and (\ref{eq3.8})-(\ref{eq3.9}), we consider now the Fourier series expansions of $\eta,w$ and $h$ by
\begin{eqnarray}
\eta(q)=\sum_{k=1}^{\infty}\eta_k\cos(kq), ~w=\sum_{k=1}^{\infty}\eta_kw_{k}(p)\cos(kq),~h=\sum_{k=1}^{\infty}\eta_kh_k(p)\cos(kq). \nonumber
\end{eqnarray}
Then the Fourier coefficients $w_k(p)$ of the solution $w$ to (\ref{eq3.8}) can be written as
\begin{eqnarray}\label{eq3.10}
w_k(p)=\int^0_{-1} G_1(p,r)\left( 2\gamma-(1+r)(\gamma r+\lambda)k^2 \right) \,dr
\end{eqnarray}
where
\begin{eqnarray}
G_1(p,r)=\frac{1}{k\sinh(k)}\left\{\begin{array}{llll}{\sinh(k(1+p))\sinh(kr)} & {\text { for }~p\leq r}, \\
~\\
{\sinh(kp)\sinh(k(1+r))} & { \text { for }~p\geq r}\end{array}\right. \nonumber
\end{eqnarray}
is Green's function of the operator $\frac{\partial}{\partial p^2}-k^2$ on the interval $[-1,0]$ with Dirichlet boundary conditions (see Subsection 5.1).
Similarly, the Fourier coefficients $h_k(p)$ of the solution $h$ to (\ref{eq3.9}) can be written as
\begin{eqnarray}\label{eq3.11}
h_k(p)=\int^1_0 G_2(p,r)E_0(r-1)k^2 \,dr
\end{eqnarray}
where
\begin{eqnarray}
G_2(p,r)=-\frac{1}{k\sinh(k)}\left\{\begin{array}{llll}{\sinh(kp)\sinh(k(1-r))} & {\text { for }~p\leq r}, \\
~\\
{\sinh(k(1-p))\sinh(kr)} & { \text { for }~p\geq r}\end{array}\right. \nonumber
\end{eqnarray}
is Green's function of the operator $\frac{\partial}{\partial p^2}-k^2$ on the interval $[0,1]$ with Dirichlet boundary conditions (see Subsection 5.1).

Thus, it follows from (\ref{eq3.10}) and (\ref{eq3.11}) that
\begin{eqnarray} \label{eq3.12}
w_{k,p}(0)& & =
\frac{1}{\sinh(k)}\int_{-1}^{0}(2\gamma-(1+p)(\gamma p+\lambda)k^2)\sinh(k(1+p))dp\nonumber\\
& &=\frac{(\gamma+\lambda-\lambda k)e^k-(\gamma+\lambda+\lambda k)e^{-k}}{2\sinh(k)}
\end{eqnarray}
and
\begin{eqnarray} \label{eq3.13}
h_{k,p}(0)& & =
-\frac{1}{\sinh(k)}\int_{0}^{1}E_0(p-1)k^2\sinh(k(1-p))dp \nonumber\\
& &=\frac{(k-1)e^k+(k+1)e^{-k}}{2\sinh(k)}E_0.
\end{eqnarray}

Summarising, we find that the Fr\'{e}chet derivative of $\partial_{\eta}F(\lambda,E_0,0)$ is the Fourier multiplier, that is to say
\begin{eqnarray} \label{eq3.14}
\partial_{\eta}F(\lambda,E_0,0)\sum_{k=1}^{\infty}\eta_{k}\cos(kq)=\sum_{k=1}^{\infty}D_{k}(\lambda,E_0)\eta_{k}\cos(kq)
\end{eqnarray}
where
\begin{eqnarray}
D_{k}(\lambda,E_0):& & =-2\lambda^2+2g+2\sigma k^2-2\epsilon_0E_0^2-\epsilon_0E_0^2\frac{(k-1)e^k+(k+1)e^{-k}}{\sinh(k)}\nonumber\\
& &+\lambda\frac{(\gamma+\lambda-\lambda k)e^k-(\gamma+\lambda+\lambda k)e^{-k}}{\sinh(k)} \nonumber\\
& &=-\frac{2k}{\tanh(k)}\left(\lambda^2-\frac{\gamma}{k}\tanh(k)\lambda+\epsilon_0E_0^2-\frac{g+\sigma k^2}{k}\tanh(k)\right). \nonumber
\end{eqnarray}
Indeed, this formula
is called dispersion relation. Once the dispersion relation vanishes, which means the linearized operator $\partial_{\eta}F(\lambda,E_0,0)$ is degenerate and the nontrivial solutions may occur.
It is obvious that
\begin{eqnarray} \label{eq3.15}
\lambda^*_{k,\pm}=\frac{\gamma}{2k}\tanh(k)\pm\sqrt{\frac{\gamma^2\tanh^2(k)}{4k^2}+\frac{(g+\sigma k^2)\tanh(k)}{k}-\epsilon_0E_0^2}
\end{eqnarray}
are solutions to $D_{k}(\lambda,E_0)=0$ if there holds that
\begin{eqnarray} \label{eq3.16}
\frac{(g+\sigma k^2)\tanh(k)}{k}>\epsilon_0E_0^2, ~~\text{for any}~~k\in\mathbb{N}^+.
\end{eqnarray}

\begin{remark} \label{rem1}
In fact, the wave number $k$ and the electric field $E_0$ satisfying
$$\frac{\gamma^2\tanh^2(k)}{4k^2}+\frac{(g+\sigma k^2)\tanh(k)}{k}>\epsilon_0E_0^2$$
are enough to conclude the occurrence of electrohydrodynamic waves, which means physically that the electric field is not too strong. However, a slightly stronger assumption (\ref{eq3.16}) would ensure that
$$
\lambda^*_{k,+}>0,\qquad \lambda^*_{k,-}<0,
$$
which play a key role in the ripples. That is to say that we will let $\lambda^*_{k,+}=\lambda^*_{l,+}$ (or $\lambda^*_{k,+}=\lambda^*_{l,+}$) for any $k\neq l$ in Section 4, which leads to $E_0^2=E_{k,l}$ where $E_{k,l}$ is defined by (\ref{eq4.5}) and the case of bifurcation from two-dimensional kernels would occur.
\end{remark}

\subsection{\bf The existence of local primary bifurcation}

In this subsection, we will give our first main result on the existence of  electrohydrodynamic capillary-gravity waves by using Theorem \ref{thm6.1} in Appendix.

Based on the argument in Subsection 3.1, it is obvious that
\begin{eqnarray} \label{eq3.17}
F(\lambda,E_0,0)=0
\end{eqnarray}
for any $\lambda\in \mathbb{R}$, which implies that (H1) in Theorem \ref{thm6.1} holds. It follows from (\ref{eq3.14}) and (\ref{eq3.15}) that the bounded linear operator $\partial_{\eta}F(\lambda,E_0,0):\mathbb{R}^2\times X\rightarrow Y$ is invertible whenever $\lambda\neq \lambda^*_{k,\pm}$ for any integer $k\geq 1$. By the implicit function theorem, these points are not bifurcation points.

However, we claim that (H2) in Theorem \ref{thm6.1} holds for each $\lambda^*\in \{\lambda^*_{k,\pm}: k\in \mathbb{N}^+\}$. From (\ref{eq3.14}) and (\ref{eq3.15}), it is obvious that $\mathcal{N}(\partial_{\eta}F(\lambda^*,E_0,0))$ is one-dimensional and generated by $\eta^*=\cos(kq)\in X$.
While $\mathcal{R}(\partial_{\eta}F(\lambda^*,E_0,0))$ is the closed subspace of $Y$ formed by the elements $f\in Y$ satisfying that
$$
\int^{\pi}_{-\pi}f(q)\cos(kq)dq=0.
$$
Then it follows from (\ref{eq3.14})-(\ref{eq3.16}) that
\begin{equation} \label{eq3.18}
\partial_{\lambda\eta}F(\lambda^*,E_0,0)[1,\eta^*]=-\frac{2k}{\tanh(k)}\left(2\lambda^*-\frac{\gamma}{k}\tanh(k)\right)\eta^*\notin \mathcal{R}(\partial_{\eta}F(\lambda^*,E_0,0)),
\end{equation}
which shows that (H2) holds for every $\lambda^*\in \{\lambda^*_{k,\pm}: k\in \mathbb{N}^+\}$. With (\ref{eq3.17}) and (\ref{eq3.18}) in hands, then we can obtain the following result by applying the Theorem \ref{thm6.1} in Appendix.

\begin{theorem} \label{thm3.1} (Primary branches of bifurcation)
Assume that the electric field $E_0$ such that (\ref{eq3.16}) holds and $E_0^2\neq E_{k,l}$ for any positive integer $k,l$ and the vorticity function $\gamma\in \mathbb{R}$, let $\lambda_{k,\pm}^*$ be shown as in (\ref{eq3.15}). For any $\lambda\in \mathbb{R}\backslash \left\{\lambda_{k,\pm}^*:k\in \mathbb{N}^+\right\}$, the
solutions of (\ref{eq3.1}) are trivial. For $\lambda\in \lambda_{k,\pm}^*$ and each choice of sign $\pm$, there exists in the space $\mathbb{R}\times X$ a
continuous curve $\mathcal{K}_{k,\pm} = \left\{\left(\lambda_{k,\pm}(s), \eta(s)\right) : s\in \mathbb{R}\right\}$
of solutions of (\ref{eq3.1}) satisfying

(i) $\left(\lambda_{k,\pm}(0), \eta(0)\right)=\left(\lambda_{k,\pm}^*, 0\right)$;

(ii) $\eta=\eta(s)$ in $X$ with $\eta(s)= s\eta^*+o(s)$ for $|s|<\varepsilon$, where $\eta^*=\cos(kq)$ and $\varepsilon>0$ sufficiently
small;

(iii) there exist a neighbourhood $\mathcal{U}_{k,\pm}$ of $\left(\lambda_{k,\pm}^*, 0\right)$ in $\mathbb{R}\times X$ and
$\varepsilon>0$ sufficiently small such that
\begin{equation}
\left\{\left(\lambda_{k,\pm}, \eta\right)\in \mathcal{U}_{k,\pm} : \eta\not\equiv 0\,\, \text{and}\,\, f(\lambda,\eta)=0\right\} = \left\{\left(\lambda_{k,\pm}(s), \eta(s)\right) : 0 < |s| < \varepsilon\right\}.\nonumber
\end{equation}
\end{theorem}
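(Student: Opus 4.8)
The plan is to recognise Theorem~\ref{thm3.1} as a direct consequence of the Crandall--Rabinowitz local bifurcation theorem (Theorem~\ref{thm6.1}) applied to the map $F(\lambda,E_0,\cdot):X\to Y$ with $E_0$ held fixed and $\lambda$ playing the role of bifurcation parameter. Before invoking that theorem I would first record the functional-analytic prerequisites, chiefly that $F$ is of class $C^2$ from $\mathbb{R}^2\times X$ into $Y$. This follows by combining the cited regularity of the solution operators, namely $V\in C^2(\mathbb{R}\times X,C^{2,\alpha}_{per,e}(\overline{D_a}))$ and $\psi\in C^2(\mathbb{R}\times X,C^{2,\alpha}_{per,e}(\overline{D_b}))$ (see \cite[Lemma 4.1]{HenryBM}), with the observation that the nonlinearity in (\ref{eq3.1}) is a rational expression in $\eta,\eta',\eta''$ and in the traces at $p=0$ of $V,\psi$ and their first derivatives, whose denominators $1\pm\eta$ and $(1+\eta'^2)^{3/2}$ stay bounded away from zero because $|\eta|<1$ on $X$. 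Hypothesis (H1) of Theorem~\ref{thm6.1} is exactly the identity (\ref{eq3.17}), already established from the explicit trivial solution (\ref{eq2.5}).

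The core of the argument is the spectral analysis of $L:=\partial_\eta F(\lambda,E_0,0)$ carried out in Subsection~3.1. I would reuse formula (\ref{eq3.14}): $L$ is the Fourier multiplier with symbol $D_k(\lambda,E_0)$, and the factored form shows $D_k(\lambda,E_0)=0$ precisely when $\lambda=\lambda^*_{k,+}$ or $\lambda=\lambda^*_{k,-}$ as given in (\ref{eq3.15}), which are real and distinct because (\ref{eq3.16}) makes the radicand strictly positive. For $\lambda\notin\{\lambda^*_{k,\pm}:k\ge 1\}$ every symbol is nonzero and bounded below in modulus by a positive multiple of $k^2$, so $L$ is boundedly invertible and the implicit function theorem rules out bifurcation; this gives the first assertion of the theorem. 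Fix now $\lambda^*\in\{\lambda^*_{k,\pm}\}$. Here one must check that $\mathcal N(L)$ is exactly one-dimensional: a mode $\cos(lq)$ lies in $\mathcal N(L)$ iff $D_l(\lambda^*,E_0)=0$, and the hypothesis $E_0^2\neq E_{k,l}$ for all $k,l$ guarantees, via the computation leading to the quantity $E_{k,l}$ of (\ref{eq4.5}), that no second wave number $l\neq k$ can share the critical speed $\lambda^*$; hence $\mathcal N(L)=\operatorname{span}\{\eta^*\}$ with $\eta^*=\cos(kq)$, and by the real and even multiplier structure $\mathcal R(L)=\{f\in Y:\int_{-\pi}^{\pi}f(q)\cos(kq)\,dq=0\}$ is closed of codimension one.

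It then remains to verify the transversality condition (H2), that $\partial_{\lambda\eta}F(\lambda^*,E_0,0)[1,\eta^*]\notin\mathcal R(L)$. Differentiating the symbol in $\lambda$ gives (\ref{eq3.18}), namely $\partial_{\lambda\eta}F(\lambda^*,E_0,0)[1,\eta^*]=-\tfrac{2k}{\tanh k}\bigl(2\lambda^*-\tfrac{\gamma}{k}\tanh k\bigr)\eta^*$, so membership in $\mathcal R(L)$ would force the scalar factor to vanish; but $2\lambda^*_{k,\pm}-\tfrac{\gamma}{k}\tanh k=\pm 2\sqrt{\tfrac{\gamma^2\tanh^2 k}{4k^2}+\tfrac{(g+\sigma k^2)\tanh k}{k}-\epsilon_0E_0^2}\neq 0$ precisely by (\ref{eq3.16}). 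Hence (H2) holds and Theorem~\ref{thm6.1} applies, producing the local bifurcation curve $\mathcal K_{k,\pm}=\{(\lambda_{k,\pm}(s),\eta(s))\}$ through $(\lambda^*_{k,\pm},0)$, with the Lyapunov--Schmidt expansion $\eta(s)=s\eta^*+o(s)$ in $X$ and the local uniqueness statement (iii); assertions (i)--(iii) are then simply the specialisation of the conclusion of Theorem~\ref{thm6.1} to the present $F$.

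\textbf{Main obstacle.} Essentially all of the genuinely analytic content — the Green's function representations (\ref{eq3.10})--(\ref{eq3.11}), the Fourier-multiplier identity (\ref{eq3.14}), and the transversality computation (\ref{eq3.18}) — is already in place, so the proof is largely a matter of assembling hypotheses. The one point that deserves real care is the \emph{simplicity of the kernel}: showing that for a non-resonant choice of $E_0$, i.e.\ $E_0^2\neq E_{k,l}$, the equation $D_l(\lambda^*_{k,\pm},E_0)=0$ in the unknown $l\in\mathbb N^+$ admits only the solution $l=k$. This is exactly where the constant $E_{k,l}$ of Section~4 enters, and it is the hypothesis that must be propagated carefully so as to legitimately invoke the \emph{one-dimensional} Crandall--Rabinowitz theorem here, reserving the genuinely two-dimensional (resonant) case $E_0^2=E_{k,l}$ for the secondary bifurcation analysis of Section~4.
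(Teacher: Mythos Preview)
Your proposal is correct and follows essentially the same route as the paper: verify (H1) via (\ref{eq3.17}), use the Fourier-multiplier representation (\ref{eq3.14}) to show that $\partial_\eta F(\lambda^*,E_0,0)$ is Fredholm of index zero with one-dimensional kernel spanned by $\cos(kq)$ and codimension-one range, check transversality from (\ref{eq3.18}) using (\ref{eq3.16}), and then invoke Theorem~\ref{thm6.1}. Your write-up is in fact slightly more explicit than the paper's in justifying the simplicity of the kernel via the non-resonance hypothesis $E_0^2\neq E_{k,l}$ and in spelling out why the transversality scalar is nonzero.
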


\subsection{\bf The local stability of the primary bifurcation}
Inspired by \cite{ConstantinS1,DaiLZ}, we now establish the local stability of the electrohydrodynamic waves lying on the primary branch $\mathcal{K}_{k,\pm}$ obtained in Theorem \ref{thm3.1}. We would like to mention that the concept of formal stability has been given in Appendix.

\begin{theorem} \label{thm3.2} (The stability of the local primary bifurcation curve)\\
(i) For the fixed  primary bifurcation branch $(\lambda_{k,\pm}(s),\eta(s))$ emanating from $\lambda_{k,\pm}^*$, we have that
$$\lambda_{k,\pm}'(0)=0$$
and there holds that
$$
\lambda_{k,+}''(0)<0,~~~\lambda_{k,-}''(0)>0
$$
when the constant vorticity $\gamma\in[-\varepsilon,\varepsilon]$ for $\varepsilon$ small enough.\\
(ii) As the constant vorticity $\gamma\in[-\varepsilon,\varepsilon]$, near the bifurcation point $\lambda_{k,+}^*$, the laminar solution is formally stable for $\lambda>\lambda_{k,+}^*$ but unstable for $\lambda<\lambda_{k,+}^*$. However, near the bifurcation point $\lambda_{k,-}^*$, the laminar solution is unstable for $\lambda>\lambda_{k,-}^*$ but formally stable for $\lambda<\lambda_{k,-}^*$.\\
(iii) As the constant vorticity $\gamma\in[-\varepsilon,\varepsilon]$, the nontrivial solution curves are formally stable near the bifurcation points $\lambda_{k,\pm}^*$ in local. (see Fig.3)
\end{theorem}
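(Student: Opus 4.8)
The plan is to run the Lyapunov--Schmidt/Crandall--Rabinowitz expansion along each branch $\mathcal K_{k,\pm}$ up to third order, read off $\lambda_{k,\pm}'(0)$ and $\lambda_{k,\pm}''(0)$, and then obtain the stability statements from the sign of the symbol of $\partial_\eta F$ that crosses zero, via the exchange-of-stability theorem \cite{CrandallR1}. Put $\eta^*=\cos(kq)$, write $\eta(s)=s\eta^*+s^2\eta_2+o(s^2)$ and $\lambda_{k,\pm}(s)=\lambda_{k,\pm}^*+s\lambda_{k,\pm}'(0)+\tfrac{s^2}{2}\lambda_{k,\pm}''(0)+o(s^2)$, insert into $F(\lambda,E_0,\eta)=0$, and expand in $s$, using $F(\lambda,E_0,0)\equiv 0$ to kill every pure $\lambda$-derivative at $\eta=0$. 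Applying to the order-$s^2$ identity the functional $\ell(f)=\int_{-\pi}^{\pi}f\cos(kq)\,dq$ — which annihilates $\mathcal R(\partial_\eta F(\lambda^*,E_0,0))$ — gives $2\lambda'(0)\,\ell(\partial_{\lambda\eta}F(\lambda^*,E_0,0)[1,\eta^*])+\ell(\partial_{\eta\eta}F(\lambda^*,E_0,0)[\eta^*,\eta^*])=0$; since (\ref{eq3.18}) makes $\ell(\partial_{\lambda\eta}F(\lambda^*,E_0,0)[1,\eta^*])\neq0$, it remains only to see that $\ell(\partial_{\eta\eta}F(\lambda^*,E_0,0)[\eta^*,\eta^*])=0$, i.e.\ that the quadratic self-interaction of the $k$-th harmonic carries no $k$-th harmonic. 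This is forced by $q$-translation equivariance of $F$ (the system is autonomous in $x$): its second derivative at the invariant state $\eta=0$ sends $(e^{ikq},e^{ikq})$ to a multiple of $e^{2ikq}$ and $(e^{ikq},e^{-ikq})$ to a constant, so $\partial_{\eta\eta}F(\lambda^*,E_0,0)[\cos kq,\cos kq]\in\mathrm{span}\{1,\cos(2kq)\}$ and $\ell$ kills it. Hence $\lambda_{k,\pm}'(0)=0$.

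With $\lambda'(0)=0$, the order-$s^2$ equation determines $\eta_2$ by $\partial_\eta F(\lambda^*,E_0,0)[\eta_2]=-\tfrac12\partial_{\eta\eta}F(\lambda^*,E_0,0)[\eta^*,\eta^*]$; since the right side is a zero-average multiple of $\cos(2kq)$ and $\partial_\eta F(\lambda^*,E_0,0)$ has symbol $D_{2k}(\lambda^*,E_0)\neq0$ (excluded $\lambda_{k,\pm}^*=\lambda_{2k,\pm}^*$ by $E_0^2\neq E_{k,l}$ of Theorem \ref{thm3.1}, and $\lambda_{k,+}^*>0>\lambda_{2k,-}^*$ by (\ref{eq3.16})), $\eta_2$ is an explicit multiple of $\cos(2kq)$. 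Going to order $s^3$ and applying $\ell$ produces the usual formula
\[
\lambda_{k,\pm}''(0)=-\frac{\big\langle\partial_{\eta\eta\eta}F(\lambda^*,E_0,0)[\eta^*,\eta^*,\eta^*]+6\,\partial_{\eta\eta}F(\lambda^*,E_0,0)[\eta^*,\eta_2],\,\cos(kq)\big\rangle}{3\,\ell\big(\partial_{\lambda\eta}F(\lambda^*,E_0,0)[1,\eta^*]\big)},
\]
whose denominator equals, by (\ref{eq3.18}), $-\tfrac{6\pi k}{\tanh k}\big(2\lambda_{k,\pm}^*-\tfrac{\gamma}{k}\tanh k\big)$, strictly negative on the $+$ branch and strictly positive on the $-$ branch for every $\gamma$ under (\ref{eq3.16}). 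So it suffices to show the numerator $N_{k,\pm}$ is strictly negative, and I would do this at $\gamma=0$ first: the flow is then irrotational, the first and second $\eta$-variations of $\psi$ and $V$ solve constant-coefficient Dirichlet problems built from the Green's functions $G_1,G_2$ of Subsection 3.1, and one computes $\partial_{\eta\eta}F$, $\partial_{\eta\eta\eta}F$, $\eta_2$ and finally $N_{k,0}$ explicitly; the standing assumption (\ref{eq3.16}) controls the sign of the electric contribution $-\epsilon_0(V_q^2+\cdots)$ and the explicit evaluation yields $N_{k,0}<0$ for all $k$. Moreover at $\gamma=0$ the map $F$ is even in $\lambda$ (it is quadratic in $\psi$, with $\psi(-\lambda,\eta)=-\psi(\lambda,\eta)$ and $Q(-\lambda)=Q(\lambda)$), so $\partial_{\eta\eta}F$, $\partial_{\eta\eta\eta}F$, $\eta_2$ are even in $\lambda^*$, whence $N_{k,+}=N_{k,-}=N_{k,0}<0$; combined with the denominator signs this gives $\lambda_{k,+}''(0)<0$, $\lambda_{k,-}''(0)>0$ at $\gamma=0$, and since all the objects above are $C^2$ in $\gamma$ (\cite[Lemma 4.1]{HenryBM}) and the inequalities are strict, they persist for $|\gamma|\le\varepsilon$, which is (i). The sign computation of $N_{k,0}$ — assembling the second $\eta$-variations of $\psi$ and $V$ and projecting $\partial_{\eta\eta}F[\eta^*,\eta_2]$ onto $\cos(kq)$, keeping track of the electric terms — is the main obstacle; everything else is bookkeeping of expansion orders.

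For (ii), the linearisation of the laminar flow at parameter $\lambda$ is the Fourier multiplier with symbols $D_l(\lambda,E_0)$ of (\ref{eq3.14}). The upward quadratic in $\lambda$ inside $D_k$ has roots $\lambda_{k,-}^*<\lambda_{k,+}^*$, so $D_k(\lambda,E_0)>0$ strictly between the roots and $<0$ outside, while $\partial_\lambda D_k(\lambda_{k,\pm}^*,E_0)=-\tfrac{2k}{\tanh k}\big(2\lambda_{k,\pm}^*-\tfrac{\gamma}{k}\tanh k\big)$ is $<0$ at $\lambda_{k,+}^*$ and $>0$ at $\lambda_{k,-}^*$. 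In a small neighbourhood of $\lambda_{k,\pm}^*$ the non-resonance hypothesis $E_0^2\neq E_{k,l}$ keeps every other symbol $D_l$ ($l\neq k$) bounded away from zero, so only $D_k$ changes sign there; by the notion of formal stability recalled in the Appendix — which in such a neighbourhood is governed by the sign of the single crossing symbol $D_k$, with $D_k<0$ the stable sign — the laminar solution is formally stable exactly where $D_k<0$: for $\lambda>\lambda_{k,+}^*$ but not for $\lambda<\lambda_{k,+}^*$, and for $\lambda<\lambda_{k,-}^*$ but not for $\lambda>\lambda_{k,-}^*$. This is (ii).

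Finally, for (iii), since (H1)--(H2) of Theorem \ref{thm6.1} hold at each $\lambda_{k,\pm}^*$, the linearised-stability theorem \cite{CrandallR1} provides, for small $|s|$, exactly one simple eigenvalue $\widetilde D_k(s)$ of $\partial_\eta F(\lambda_{k,\pm}(s),E_0,\eta(s))$ near $0$ with $\widetilde D_k(0)=0$, obeying the sign law $\widetilde D_k(s)\sim-s\,\lambda_{k,\pm}'(s)\,\partial_\lambda D_k(\lambda_{k,\pm}^*,E_0)$ as $s\to0$. Using $\lambda_{k,\pm}'(s)=s\lambda_{k,\pm}''(0)+o(s)$ from (i), this equals $-s^2\lambda_{k,\pm}''(0)\,\partial_\lambda D_k(\lambda_{k,\pm}^*,E_0)+o(s^2)$; on the $+$ branch $\lambda_{k,+}''(0)<0$ and $\partial_\lambda D_k(\lambda_{k,+}^*,E_0)<0$, on the $-$ branch $\lambda_{k,-}''(0)>0$ and $\partial_\lambda D_k(\lambda_{k,-}^*,E_0)>0$, so in both cases $\widetilde D_k(s)<0$ for $0<|s|<\varepsilon$. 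Since the remaining eigenvalues stay on the stable side near the bifurcation point, the criterion used in (ii) shows that the nontrivial waves on $\mathcal K_{k,\pm}$ are formally stable near $\lambda_{k,\pm}^*$, which is (iii).
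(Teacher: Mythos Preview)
Your overall strategy mirrors the paper's: expand along the branch, project on $\cos(kq)$ to read off $\lambda'(0)$ and $\lambda''(0)$, check the sign at $\gamma=0$, extend by continuity, and deduce (ii)--(iii) from the Crandall--Rabinowitz exchange-of-stability Theorem~\ref{thm6.3} exactly as the paper does (the paper's $\beta'(\lambda_{k,\pm}^*)$ is your $\partial_\lambda D_k(\lambda_{k,\pm}^*,E_0)$, and its limit relation for $s\lambda'(s)/\mu(s)$ is your sign law $\widetilde D_k(s)\sim -s\lambda'(s)\,\partial_\lambda D_k$). Your translation-equivariance argument for $\ell(\partial_{\eta\eta}F[\eta^*,\eta^*])=0$ is the abstract version of the paper's direct computation: after writing $F_{\eta\eta}[\eta^*,\eta^*]$ via (\ref{eq5.4}), the paper simply observes $\int_{-\pi}^{\pi}\cos^3(kq)\,dq=\int_{-\pi}^{\pi}\sin^2(kq)\cos(kq)\,dq=0$. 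Your even-in-$\lambda$ symmetry at $\gamma=0$ is a pleasant shortcut the paper does not exploit; it instead computes the $\pm$ cases separately from (\ref{eq5.6}) and obtains the same value.

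The one substantive divergence is the formula for $\lambda''(0)$. You correctly retain the second-order corrector $\eta_2$ and include the term $6\,\partial_{\eta\eta}F[\eta^*,\eta_2]$ in the numerator; the paper instead invokes the second clause of Theorem~\ref{thm6.2}, whose hypothesis $F_{\eta\eta}[\eta^*,\eta^*]=0$ (identically, not merely after applying $\widetilde l$) is \emph{not} satisfied by (\ref{eq3.20}), and accordingly drops the $\eta_2$-contribution, computing only $\langle\widetilde l,F_{\eta\eta\eta}[\eta^*,\eta^*,\eta^*]\rangle$ from (\ref{eq5.6}) and checking term by term that it is negative at $\gamma=0$ using $T_k-1<0$. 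Since your $\eta_2$ is a nonzero multiple of $\cos(2kq)$ and, by the same equivariance, $\partial_{\eta\eta}F[\cos(kq),\cos(2kq)]$ carries a nontrivial $\cos(kq)$-component, the extra term you keep is generically nonzero and your expression is the full Lyapunov--Schmidt one. What your route buys is rigour; what it costs is that the sign of $N_{k,0}$ now genuinely requires the explicit computation you flag as ``the main obstacle'', and you cannot simply import the paper's shortcut.
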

\begin{figure}[ht] \label{fig3}
\centering
\includegraphics[width=0.75\textwidth]{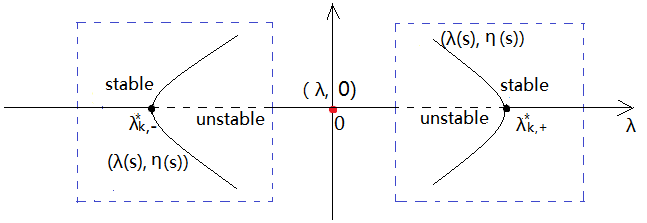}
\centering
\caption{The local stability near bifurcation points $\lambda_{k,\pm}^*$.}
\end{figure}
\begin{proof}
Let us first use the Theorem \ref{thm6.2} to prove the bifurcation direction (i).
For any $\eta\in X$, define
\begin{eqnarray}
\widetilde{l}(\eta)=\frac{1}{\pi}\int_{-\pi}^{\pi} \eta\cos(kq) dq.\nonumber
\end{eqnarray}
Then $\widetilde{l}$ is a linear functional on $X$ such that $\widetilde{l}\left(\cos(kq)\right)=1$.
It is clearly $\mathcal{R}(F_{\eta}\left(\lambda_{k,\pm}^*,0\right))=\left\{\eta:\widetilde{l}(\eta)=0\right\}$.
Due to $\eta^*=\cos(kq)$, it follows (\ref{eq3.18}) that
\begin{equation}\label{eq3.19}
\left\langle \widetilde{l}, F_{\lambda\eta}\left(\lambda_{k,\pm}^*,0\right)[1,\eta^*]\right\rangle=\mp\frac{4k}{\tanh(k)}\sqrt{\frac{\gamma^2\tanh^2(k)}{4k^2}+\frac{(g+\sigma k^2)\tanh(k)}{k}-\epsilon_0E_0^2}.
\end{equation}
Let $T_{k}:=\frac{\tanh(k)}{k}$ for $k\in\mathbb{N}^+$, it follows from (\ref{eq5.4}) that
\begin{eqnarray} \label{eq3.20}
~& &F_{\eta\eta}(\lambda_{k,\pm}^*, 0)[\eta^*,\eta^*]
~\nonumber\\
& &=2(\lambda_{k,\pm}^{*2}-\epsilon_0E_0^{2}) k^2\sin^2(kq)+6(\lambda_{k,\pm}^{*2}-\epsilon_0E_0^{2})\cos^2(kq)     \nonumber\\
& &-2\epsilon_0 \frac{(1-T_k)^2}{T_k^2}E_0^{2}\cos^2(kq)+2 \frac{\left((\gamma+\lambda_{k,\pm}^*)T_k-\lambda_{k,\pm}^*\right)^2}{T_k^2}\cos^2(kq)  \nonumber\\
& & -8\epsilon_0E_0^{2}\frac{1-T_k}{T_k}\cos^2(kq)     -8 \lambda_{\pm}^*\frac{(\gamma+\lambda_{k,\pm}^*)T_k-\lambda_{k,\pm}^*}{T_k}\cos^2(kq)
\end{eqnarray}
which leads to that
\begin{eqnarray}\label{eq3.21}
\left\langle \widetilde{l}, F_{\eta\eta}(\lambda_{k,\pm}^*, 0)[\eta^*,\eta^*]\right\rangle=0,
\end{eqnarray}
where we use the following facts
$$
\int^{\pi}_{-\pi} \cos^3(kq)dq=0, \quad \int^{\pi}_{-\pi} \sin^2(kq) \cos(kq)dq=0.
$$
Combining (\ref{eq3.19}) with (\ref{eq3.21}) and applying Theorem \ref{thm6.2}, we obtain that
\begin{equation}
\lambda_{k,\pm}'(0)=-\frac{\left\langle \widetilde{l}, F_{\eta\eta}(\lambda_{k,\pm}^*, 0)[\eta^*,\eta^*]\right\rangle}{2\left\langle \widetilde{l}, F_{\lambda\eta}\left(\lambda_{k,\pm}^*,0\right)[1,\eta^*]\right\rangle} =0,\nonumber
\end{equation}
which indicates that the primary bifurcation is pitchfork bifurcation.

It follows from (\ref{eq3.12})-(\ref{eq3.13}) that
\begin{eqnarray}
w_{k,p}(0)=\frac{(\gamma+\lambda)T_k-\lambda}{T_k}  ,\quad h_{k,p}(0)=\left(\frac{1-T_k}{T_k}\right)E_0. \nonumber
\end{eqnarray}
Combining this with (\ref{eq5.6}), we can easily deduce that
\begin{eqnarray} \label{eq3.22}
~& &F_{\eta\eta\eta}(\lambda_{k,\pm}^*, 0)[\eta^*,\eta^*,\eta^*]
~\nonumber\\
& &=-12k^2\left(\lambda_{k,\pm}^{*2}+ \epsilon_{0}E_0^{*2} \right)\sin^2(kq)\cos(kq)-24\left(\lambda_{k,\pm}^{*2}+ \epsilon_{0}E_0^{*2} \right)\cos^3(kq)
  \nonumber\\
& &-18k^4\sigma\sin^2(kq)\cos(kq)
+36\lambda_{k,\pm}^*\frac{(\gamma+\lambda_{k,\pm}^*)T_k
-\lambda_{k,\pm}^*}{T_k}\cos^3(kq)
\nonumber\\
& &+12k^2\lambda_{k,\pm}^*\frac{(\gamma+\lambda_{k,\pm}^*)T_k-\lambda_{k,\pm}^*}{T_k}\sin^2(kq)\cos(kq) \nonumber\\
& &-12k^2\epsilon_0E_0^{*2}\left(\frac{1-T_k}{T_k}\right)\sin^2(kq)\cos(kq)-36\epsilon_0E_0^{*2}\left(\frac{1-T_k}{T_k}\right)\cos^3(kq) \nonumber\\
& &-4 \left(\frac{(\gamma+\lambda_{k,\pm}^*)T_k-\lambda_{k,\pm}^*}{T_k}\right)^2\cos^3(kq) -4\epsilon_0\left(\frac{1-T_k}{T_k}\right)^2E_0^{*2}\cos^3(kq).\nonumber\\
\end{eqnarray}
Based on (\ref{eq3.22}), we obtain that
\begin{eqnarray}\label{eq3.23}
&~&\left\langle \widetilde{l},F_{\eta\eta\eta}(\lambda_{k,\pm}^*, 0)[\eta^*,\eta^*,\eta^*]\right\rangle  \nonumber \\
& &=-3k^2\left(\lambda_{k,\pm}^{*2}+ \epsilon_{0}E_0^{*2} \right)-18\left(\lambda_{k,\pm}^{*2}+ \epsilon_{0}E_0^{*2} \right)
-\frac{9}{2}k^4\sigma  \nonumber\\
& &+3k^2\lambda_{k,\pm}^*\frac{(\gamma+\lambda_{k,\pm}^*)T_k-\lambda_{k,\pm}^*}{T_k}+27\lambda_{k,\pm}^*\frac{(\gamma+\lambda_{k,\pm}^*)T_k
-\lambda_{k,\pm}^*}{T_k}
\nonumber\\
& &-3k^2\epsilon_0E_0^{*2}\left(\frac{1-T_k}{T_k}\right)-27\epsilon_0E_0^{*2}\left(\frac{1-T_k}{T_k}\right) \nonumber\\
& &-3 \left(\frac{(\gamma+\lambda_{k,\pm}^*)T_k-\lambda_{k,\pm}^*}{T_k}\right)^2 -3\epsilon_0\left(\frac{1-T_k}{T_k}\right)^2E_0^{*2}.
\end{eqnarray}
Based on (\ref{eq3.23}), as $\gamma=0$, we find that
\begin{eqnarray}
&~&\left\langle \widetilde{l},F_{\eta\eta\eta}(\lambda_{k,+}^*, 0)[\eta^*,\eta^*,\eta^*]\right\rangle  \nonumber \\
& &=-3k^2\left(\lambda_{k,+}^{*2}+ \epsilon_{0}E_0^{*2} \right)-18\left(\lambda_{k,+}^{*2}+ \epsilon_{0}E_0^{*2} \right)
-\frac{9}{2}k^4\sigma  \nonumber\\
& &+3k^2\lambda_{k,+}^*\frac{\lambda_{k,+}^*(T_k-1)}{T_k}+27\lambda_{k,+}^*\frac{\lambda_{k,+}^*(T_k-1)}{T_k}
\nonumber\\
& &-3k^2\epsilon_0E_0^{*2}\left(\frac{1-T_k}{T_k}\right)-27\epsilon_0E_0^{*2}\left(\frac{1-T_k}{T_k}\right) \nonumber\\
& &-3 \left(\frac{\lambda_{k,+}^*T_k-\lambda_{k,+}^*}{T_k}\right)^2 -3\epsilon_0\left(\frac{1-T_k}{T_k}\right)^2E_0^{*2}<0
\end{eqnarray}
and
\begin{eqnarray}
&~&\left\langle \widetilde{l},F_{\eta\eta\eta}(\lambda_{k,-}^*, 0)[\eta^*,\eta^*,\eta^*]\right\rangle  \nonumber \\
& &=-3k^2\left(\lambda_{k,-}^{*2}+ \epsilon_{0}E_0^{*2} \right)-18\left(\lambda_{k,-}^{*2}+ \epsilon_{0}E_0^{*2} \right)
-\frac{9}{2}k^4\sigma  \nonumber\\
& &+3k^2\lambda_{k,-}^*\frac{\lambda_{k,-}^*(T_k-1)}{T_k}+27\lambda_{k,-}^*\frac{\lambda_{k,-}^*(T_k-1)}{T_k}
\nonumber\\
& &-3k^2\epsilon_0E_0^{*2}\left(\frac{1-T_k}{T_k}\right)-27\epsilon_0E_0^{*2}\left(\frac{1-T_k}{T_k}\right) \nonumber\\
& &-3 \left(\frac{\lambda_{k,-}^*T_k-\lambda_{k,-}^*}{T_k}\right)^2 -3\epsilon_0\left(\frac{1-T_k}{T_k}\right)^2E_0^{*2}<0,
\end{eqnarray}
where we use the fact $T_k-1<0$ for any integer $k>1$.
By continuity, as $\gamma\in[-\varepsilon,+\varepsilon]$ for $\varepsilon$ small enough, applying Theorem \ref{thm6.2} again, we obtain that
\begin{eqnarray}\label{eq3.24}
\lambda_{k,+}''(0)&=&\frac{\left\langle \widetilde{l},F_{\eta\eta\eta}(\lambda_{k,+}^*, 0)[\eta^*,\eta^*,\eta^*]\right\rangle }{-3\left\langle \widetilde{l}, F_{\lambda\eta}\left(\lambda_{k,+}^*,0\right)[1,\eta^*]\right\rangle}
<0
\end{eqnarray}
and
\begin{eqnarray}\label{eq3.25}
\lambda_{k,-}''(0)&=&\frac{\left\langle \widetilde{l},F_{\eta\eta\eta}(\lambda_{k,-}^*, 0)[\eta^*,\eta^*,\eta^*]\right\rangle }{-3\left\langle \widetilde{l}, F_{\lambda\eta}\left(\lambda_{k,-}^*,0\right)[1,\eta^*]\right\rangle}
>0
\end{eqnarray}
which indicates that the bifurcation at $\lambda_{k,+}^*$ is subcritical, however the bifurcation at $\lambda_{k,-}^*$ is supercritical.

In order to prove (ii) and (iii), we need to apply the exchange of stability Theorem \ref{thm6.3}. Based on the arguments above, we have known that $0$ is a simple eigenvalue of $T:=F_{\eta}\left(\lambda_{k,\pm}^*,0\right)$ and the transversal condition is satisfied.
Let the operator $K$ be the identical operator. Since $0$ is a simple eigenvalue of $T$, then $0$ is also a $K$-simple eigenvalue of $T$ (see the Definition \ref{def1} in Appendix). It is obvious that all of assumptions of Theorem \ref{thm6.3} hold.

Applying Theorem \ref{thm6.3} by letting $\psi^*=\cos(kx)$, we have that there are eigenvalues $\mu(s)$, $\beta(\lambda_{k,\pm})\in \mathbb{R}$ and their eigenfunctions $u(s)$,
$\psi(\lambda_{k,\pm})\in X $, such that
\begin{eqnarray}
F_{\eta}(\lambda_{k,\pm}(s),w(s))u(s)=\mu_{\pm}(s)u(s),\nonumber
\end{eqnarray}
\begin{eqnarray}
F_{\eta}(\lambda_{k,\pm},0)\psi(\lambda_{k,\pm})=\beta(\lambda_{k,\pm})\psi(\lambda_{k,\pm})\nonumber
\end{eqnarray}
with
\begin{align*}
\mu(0)=\beta\left(\lambda^*_{k,\pm}\right)=0,~~u(0)=\psi\left(\lambda^*_{k,\pm}\right)=\cos(kx)
\end{align*}
and
\begin{eqnarray}
\beta'\left(\lambda^*_{k,\pm}\right)\neq 0,~~\lim_{s\rightarrow 0,\beta(s)\neq 0}\frac{s\lambda_{k,\pm}'(s)}{\mu_{\pm}} =-\frac{1}{\beta'\left(\lambda^*_{k,\pm}\right)}.\nonumber
\end{eqnarray}

It follows from (\ref{eq3.14}) that
\begin{eqnarray}
\left[F_\eta(\lambda,0)\right]\cos(kx)&=&-\frac{2k}{\tanh(k)}\left(\lambda^2-\frac{\gamma}{k}\tanh(k)\lambda+\epsilon_0E_0^2-\frac{g+\sigma k^2}{k}\tanh(k)\right)\cos(kx).\nonumber
\end{eqnarray}
So we have that
\begin{eqnarray}
\beta'\left(\lambda^*_{k,\pm}\right)= \mp\frac{4k}{\tanh(k)}\sqrt{\frac{\gamma^2\tanh^2(k)}{4k^2}+\frac{(g+\sigma k^2)\tanh(k)}{k}-\epsilon_0E_0^2}.\nonumber
\end{eqnarray}
When $k>1$, we deduce that
\begin{eqnarray}
\beta'\left(\lambda^*_{k,+}\right)= -\frac{4k}{\tanh(k)}\sqrt{\frac{\gamma^2\tanh^2(k)}{4k^2}+\frac{(g+\sigma k^2)\tanh(k)}{k}-\epsilon_0E_0^2}<0 \nonumber
\end{eqnarray}
and
\begin{eqnarray}
\beta'\left(\lambda^*_{k,-}\right)= \frac{4k}{\tanh(k)}\sqrt{\frac{\gamma^2\tanh^2(k)}{4k^2}+\frac{(g+\sigma k^2)\tanh(k)}{k}-\epsilon_0E_0^2}>0. \nonumber
\end{eqnarray}
Since $\beta\left(\lambda^*_{k,\pm}\right)=0$, then we can deduce that $\beta\left(\lambda\right)<0$ if $\lambda>\lambda^*_{k,+}$ and $\beta\left(\lambda\right)>0$ if $\lambda<\lambda^*_{k,+}$ for $\left\vert \lambda-\lambda^*_{k,+}\right\vert$ sufficiently small. Similarly, $\beta\left(\lambda\right)<0$ if $\lambda>\lambda^*_{k,-}$ and $\beta\left(\lambda\right)>0$ if $\lambda<\lambda^*_{k,-}$ for $\left\vert \lambda-\lambda^*_{k,-}\right\vert$ sufficiently small. This implies near the bifurcation point $\lambda^*_{k,+}$ that the laminar solution is unstable for $\lambda<\lambda^*_{k,+}$ but formally stable for $\lambda>\lambda^*_{k,+}$. Similarly, near the bifurcation point $\lambda^*_{k,-}$, the laminar solution is formally stable for $\lambda<\lambda^*_{k,-}$ but unstable for $\lambda>\lambda^*_{k,-}$.

On the other hand, by using the Theorem \ref{thm6.3} again, we have that
\begin{eqnarray}\label{eq3.26}
\lim_{s\rightarrow 0,\beta(s)\neq 0}\frac{s\lambda_{k,\pm}'(s)}{\mu_{\pm}(s)} =\frac{1}{\pm\frac{4k}{\tanh(k)}\sqrt{\frac{\gamma^2\tanh^2(k)}{4k^2}+\frac{(g+\sigma k^2)\tanh(k)}{k}-\epsilon_0E_0^2}}.
\end{eqnarray}
It follows from (\ref{eq3.26}) that
\begin{eqnarray}\label{eq3.27}
\lim_{s\rightarrow 0,\beta(s)\neq 0}\frac{s\lambda_{k,+}'(s)}{\mu_{+}(s)} =\frac{1}{\frac{4k}{\tanh(k)}\sqrt{\frac{\gamma^2\tanh^2(k)}{4k^2}+\frac{(g+\sigma k^2)\tanh(k)}{k}-\epsilon_0E_0^2}},
\end{eqnarray}
\begin{eqnarray}\label{eq3.28}
\lim_{s\rightarrow 0,\beta(s)\neq 0}\frac{s\lambda_{k,-}'(s)}{\mu_{-}(s)} =\frac{1}{-\frac{4k}{\tanh(k)}\sqrt{\frac{\gamma^2\tanh^2(k)}{4k^2}+\frac{(g+\sigma k^2)\tanh(k)}{k}-\epsilon_0E_0^2}}.
\end{eqnarray}
It is known that $\lambda_{k,\pm}'(0)=0$. By using the Taylor expansion of $\lambda_{k,\pm}'(s)$ at $s=0$, we have that
\begin{eqnarray*}
\lambda_{k,\pm}'(s)=s \lambda_{k,\pm}''(0)+o\left(s\right).
\end{eqnarray*}
It follows from (\ref{eq3.27}) and (\ref{eq3.28}) that
\begin{eqnarray}
\lim_{s\rightarrow 0}\frac{s^2\lambda_{k,+}''(0)+o\left(s^2\right)}{\mu_{+}(s)}=\frac{1}{\frac{4k}{\tanh(k)}\sqrt{\frac{\gamma^2\tanh^2(k)}{4k^2}+\frac{(g+\sigma k^2)\tanh(k)}{k}-\epsilon_0E_0^2}}>0\nonumber
\end{eqnarray}
and
\begin{eqnarray}
\lim_{s\rightarrow 0}\frac{s^2\lambda_{k,-}''(0)+o\left(s^2\right)}{\mu_{-}(s)}=\frac{1}{-\frac{4k}{\tanh(k)}\sqrt{\frac{\gamma^2\tanh^2(k)}{4k^2}+\frac{(g+\sigma k^2)\tanh(k)}{k}-\epsilon_0E_0^2}}<0. \nonumber
\end{eqnarray}
Furthermore, by using (\ref{eq3.24}) and (\ref{eq3.25}), we obtain that
\begin{eqnarray}
\mu_{+}(s)<0,\qquad \mu_{-}(s)<0.\nonumber
\end{eqnarray}
for $\left|s\right|$ ($\neq0$) small enough. This implies that the nontrivial solutions are formally stable near the bifurcation points $\lambda_{k,\pm}^*$.
\end{proof}

\section{The existence of electrohydrodynamic ripples}
In section 3, the local bifurcation problem of (\ref{eq3.1}) was studied in the case when the kernel is one-dimensional. There we only choose $\lambda$ as the bifurcation parameter by fixing the electric field $E_0$. In this section, we will regard $E_0$ as another parameter and find that if $E_0^2$ is sufficiently close to the constant $E_{k,l}$ and the resonance phenomenon occurs by using the secondary bifurcation Theorem \ref{thm6.4} stated in the Appendix, that is to say there exist at least one secondary bifurcation branch emerging from some primary branch $\mathcal{K}_{k,\pm}$ at a point far from the set of $\{\eta=0\}$. These secondary bifurcation solutions are indeed corresponding to the electrohydrodynamic ripples.

To this end, let's first introduce a new functional analytic setting by choosing a Hilbert space because closed subspaces of Hilbert space would possess in general a closed complement. Then we redefine
$$
X:=\{ w:=\sum_{n=1}^{\infty}a_{n}\cos(nq) : a_{n}\in \mathbb{R},~~ \sum_{n=1}^{\infty}a_{n}^{2}n^{6}<\infty\}
$$
and
$$
Y:=\{v:= \sum_{n=0}^{\infty}a_{n}\cos(nq) : a_{n}\in \mathbb{R},~~ \sum_{n=1}^{\infty}a_{n}^{2}n^{2}<\infty \},
$$
which are identified as subspaces of $H^{3}(\mathbb{R}/(2\pi\mathbb{Z}))$ and $H^{1}(\mathbb{R}/(2\pi\mathbb{Z}))$, respectively. It is worth noting that $H^{3}(\mathbb{R}/(2\pi\mathbb{Z}))\hookrightarrow C^{2,\alpha}(\mathbb{R}/(2\pi\mathbb{Z}))$, which ensures that (\ref{eq3.1}) holds in the classic sense.
In addition, here we take $E_0$ as another parameter by considering
\begin{eqnarray} \label{eq4.1}
F(\lambda,E_0,\eta): & & =\psi_q^2+\frac{\eta'^2+1}{(1+\eta)^2}\psi_p^2-\frac{2\eta'}{1+\eta}\psi_q\psi_p+2g\eta-2\sigma\frac{\eta''}{\left(1+\eta'^2\right)^\frac{3}{2}}\nonumber\\
& &-\epsilon_0\left(V_q^2+\frac{\eta'^2+1}{(1-\eta)^2}V_p^2-\frac{2\eta'}{1-\eta}V_qV_p\right)\bigg|_{p=0}-Q=0
\end{eqnarray}
again, where $F: \mathbb{R}^2\times X\rightarrow Y$ and $V=V(E_0,\eta), \psi=\psi(\lambda,\eta)$. Define the set $\mathcal{O}:=\{\eta\in X : |w|<1\}$ and it is obvious that the operator $F$ defined by (\ref{eq4.1}) is real-analytic from $\mathbb{R}^2\times \mathcal{O}\subset \mathbb{R}^2\times X\rightarrow Y$.
Based on the arguments in the previous section, we have that

\begin{lemma} \label{lem4.1}
Let $T_{k}:=\frac{\tanh(k)}{k}$ for $k\in\mathbb{N}^+$ and assume that (\ref{eq3.16}) holds and
\begin{equation}\label{eq4.2}
\sigma\gamma^2\left(l^2-k^2\right)T_{k}T_{l}(T_{k}-T_{l})>\left( (g+\sigma k^2)T_k-(g+\sigma l^2)T_l \right)^2,
\end{equation}
then the Frechet derivative $F_{\eta}(\lambda,E_0,0): X\rightarrow Y $ is a Fourier multiplier. Furthermore, there holds that
\begin{equation}\label{eq4.3}
F_{\eta}(\lambda,E_0,0) \sum_{k=1}^{\infty}\eta_{k}\cos(kx)=\sum_{k=1}^{\infty}D_{k}(\lambda,E_0) \eta_{k}\cos(kx),
\end{equation}
where $D_{k}(\lambda,E_0)$ is defined in (\ref{eq3.14}) and the operator $F_{\eta}(\lambda,E_0,0): X\rightarrow Y$ is a Fredholm operator of index zero for any $(\lambda,E_0)\in \mathbb{R}^2$. More precisely, defining
\begin{equation}\label{eq4.4}
\lambda_{k,\pm}^{*}:=\frac{\gamma}{2} T_k\pm\sqrt{\frac{\gamma^2}{4}T_{k}^2+(g+\sigma k^2)T_{k}-\epsilon_0E_0^2}
\end{equation}
and
\begin{equation}\label{eq4.5}
E_{k,l}:=\frac{\sigma\gamma^2\left(l^2-k^2\right)T_{k}T_{l}(T_{k}-T_{l})-\left( (g+\sigma k^2)T_k-(g+\sigma l^2)T_l \right)^2}{(T_k-T_l)^2\gamma^2\epsilon_0}
\end{equation}
for any $k,l\in \mathbb{N}^+$, we have that

(1) $\lambda_{k,+}^*>0$, $\lambda_{k,-}^*<0$ and $E_{k,l}>0$;

(2) if $\lambda\notin \{\lambda_{k,\pm}^{*} : k\in \mathbb{N}^+\}$, then $F_{\eta}(\lambda,E_0,0)$ is an invertible operator;

(3) if $\lambda=\lambda_{k,\pm}^{*}$ and $E_0^2 =E_{k,l}$ for some positive integers $k\neq l$, then $\lambda_{k,+}^{*}=\lambda_{l,+}^{*}$ (or $\lambda_{k,-}^{*}=\lambda_{l,-}^{*}$). Moreover, $0$ is an eigenvalue of the operator $F_{\eta}(\lambda,E_0,0)$ and the corresponding eigenspace is two-dimensional.
\end{lemma}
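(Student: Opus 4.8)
The plan is to read Lemma~\ref{lem4.1} as the transcription of the spectral analysis of Subsection~3.1 into the present Hilbert-space setting, followed by three short arguments for the enumerated items. First I would re-run the computation leading to $(\ref{eq3.2})$--$(\ref{eq3.14})$ verbatim: differentiating $(\ref{eq4.1})$ in $\eta$ at $\eta=0$, passing the derivative through $\psi(\lambda,\cdot)$ and $V(E_0,\cdot)$ by the chain rule, and solving the linearized elliptic problems $(\ref{eq3.8})$--$(\ref{eq3.9})$ by Fourier series with the Green's functions $G_1,G_2$, which shows that $F_\eta(\lambda,E_0,0)$ is the Fourier multiplier with symbol $D_k(\lambda,E_0)$, i.e.\ $(\ref{eq4.3})$. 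The only genuinely new point is the mapping property: since $H^{3}(\mathbb{R}/(2\pi\mathbb{Z}))\hookrightarrow C^{2,\alpha}$, the solution operators of $(\ref{eq3.8})$--$(\ref{eq3.9})$, and hence $F$, remain well defined and, by \cite{HenryBM}, real-analytic on $\mathbb{R}^{2}\times\mathcal{O}$; and because $D_k(\lambda,E_0)=-\tfrac{2k}{\tanh k}\big(\lambda^{2}-\gamma T_k\lambda+\epsilon_0E_0^{2}-(g+\sigma k^{2})T_k\big)$ grows like $2\sigma k^{2}$ as $k\to\infty$, the multiplier maps $X\subset H^{3}$ boundedly into $Y\subset H^{1}$. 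For the Fredholm claim I would split $F_\eta(\lambda,E_0,0)=A+B$ with $A$ the multiplier of symbol $2\sigma k^{2}$ (an isomorphism $X\to Y$) and $B$ the multiplier of symbol $D_k-2\sigma k^{2}=O(k)$, so $B$ maps $X$ into $H^{2}(\mathbb{R}/(2\pi\mathbb{Z}))$, which embeds compactly in $H^{1}$; hence $F_\eta(\lambda,E_0,0)$ is a compact perturbation of an isomorphism, i.e.\ Fredholm of index zero, and its kernel is $\mathrm{span}\{\cos kq:D_k(\lambda,E_0)=0\}$, a finite-dimensional space since $D_k\to\infty$.

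For item (1) I would factor $D_k(\lambda,E_0)=-\tfrac{2k}{\tanh k}(\lambda-\lambda_{k,+}^{*})(\lambda-\lambda_{k,-}^{*})$, with $\lambda_{k,\pm}^{*}$ as in $(\ref{eq4.4})$, and record that $T_k=\tanh(k)/k$ is strictly decreasing in $k$ (so $T_k\neq T_l$ when $k\neq l$). Assumption $(\ref{eq3.16})$ gives $\tfrac{\gamma^{2}}{4}T_k^{2}+(g+\sigma k^{2})T_k-\epsilon_0E_0^{2}>\tfrac{\gamma^{2}}{4}T_k^{2}\ge 0$, so the square root in $(\ref{eq4.4})$ strictly exceeds $\tfrac{|\gamma|}{2}T_k$, whence $\lambda_{k,+}^{*}>0>\lambda_{k,-}^{*}$; and $E_{k,l}>0$ is immediate from $(\ref{eq4.5})$, since for $\gamma\neq0$ and $k\neq l$ the denominator is positive and the numerator is exactly the left-minus-right side of $(\ref{eq4.2})$. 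Item (2) is then routine: if $\lambda\notin\{\lambda_{k,\pm}^{*}:k\in\mathbb{N}^{+}\}$ then $D_k(\lambda,E_0)\neq0$ for every $k$, and since $|D_k|\gtrsim k^{2}$ for large $k$ and is bounded below on the finitely many remaining modes, the inverse Fourier multiplier $\sum_{k\ge1}a_k\cos kq\mapsto\sum_{k\ge1}D_k^{-1}a_k\cos kq$ is bounded, so $F_\eta(\lambda,E_0,0)$ is invertible.

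The substance is item (3). Writing $q_k(\lambda):=\lambda^{2}-\gamma T_k\lambda+\epsilon_0E_0^{2}-(g+\sigma k^{2})T_k$, so $D_k=-\tfrac{2k}{\tanh k}q_k$, I would subtract $q_k(\lambda_0)=q_l(\lambda_0)=0$ to obtain the only possible common root $\lambda_0=-\big((g+\sigma k^{2})T_k-(g+\sigma l^{2})T_l\big)\big/\big(\gamma(T_k-T_l)\big)$, substitute it back into $q_k(\lambda_0)=0$, and simplify using the cancellation $(g+\sigma k^{2})(T_k-T_l)-\big((g+\sigma k^{2})T_k-(g+\sigma l^{2})T_l\big)=\sigma(l^{2}-k^{2})T_l$; the resulting identity reads precisely $\epsilon_0E_0^{2}=\epsilon_0E_{k,l}$ with $E_{k,l}$ as in $(\ref{eq4.5})$. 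Hence $E_0^{2}=E_{k,l}$ is equivalent to $\lambda_0$ being a common root of $q_k$ and $q_l$; as $\lambda_0$ is then one of $\{\lambda_{k,+}^{*},\lambda_{k,-}^{*}\}$ and one of $\{\lambda_{l,+}^{*},\lambda_{l,-}^{*}\}$, and by item (1) all four are nonzero with fixed sign, the matching forces either $\lambda_{k,+}^{*}=\lambda_{l,+}^{*}=\lambda_0$ or $\lambda_{k,-}^{*}=\lambda_{l,-}^{*}=\lambda_0$, and at $\lambda=\lambda_0$ the kernel of $F_\eta(\lambda,E_0,0)$ contains $\mathrm{span}\{\cos kq,\cos lq\}$. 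To see that it is \emph{exactly} two-dimensional I would show $q_m(\lambda_0)\neq0$ for every integer $m\notin\{k,l\}$: eliminating $\lambda_0^{2}+\epsilon_0E_0^{2}$ by means of $q_k(\lambda_0)=0$ turns $q_m(\lambda_0)=0$ into $\Phi(m)=\lambda_0^{2}+\epsilon_0E_0^{2}$, where $\Phi(m):=(\gamma\lambda_0+g)\tfrac{\tanh m}{m}+\sigma m\tanh m$; since $\Phi(k)=\Phi(l)$ with $k\neq l$, necessarily $\gamma\lambda_0+g>0$ (otherwise $\Phi$ is strictly increasing, contradicting $\Phi(k)=\Phi(l)$), and then $\Phi$ is decreasing on a finite initial interval and increasing afterwards, which follows from the strict monotonicity of the explicit ratio $R(m):=\big(\tanh m-m\,\mathrm{sech}^{2}m\big)\big/\big(m^{2}\tanh m+m^{3}\,\mathrm{sech}^{2}m\big)$; being at most two-to-one, $\Phi$ can equal $\lambda_0^{2}+\epsilon_0E_0^{2}$ only at $m\in\{k,l\}$. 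I expect this last monotonicity of a combination of hyperbolic functions to be the main obstacle; everything else is either a rerun of Subsection~3.1 or a finite algebraic manipulation.
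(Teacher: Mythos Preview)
Your treatment of the Fourier-multiplier structure $(\ref{eq4.3})$, the signs in item~(1), and the invertibility in item~(2) matches the paper's proof, which simply refers back to Subsection~3.1; you go further by actually supplying the Fredholm-index-zero argument via the compact-perturbation splitting $F_\eta=A+B$ with $A$ the multiplier $2\sigma k^{2}$, which the paper does not spell out here.

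The genuine divergence is in the two-dimensionality part of item~(3). The paper disposes of a possible third mode in one line: if some $m\notin\{k,l\}$ also satisfied $\lambda_{m,\pm}^{*}=\lambda_{k,\pm}^{*}$, then $E_0^{2}=E_{k,l}=E_{k,m}$, ``which leads to $T_l=T_m$'', contradicting $l\neq m$. You instead recast $q_m(\lambda_0)=0$ as the level-set condition $\Phi(m)=\lambda_0^{2}+\epsilon_0E_0^{2}$ with $\Phi(m)=(\gamma\lambda_0+g)\tfrac{\tanh m}{m}+\sigma\,m\tanh m$, observe that $\Phi(k)=\Phi(l)$ forces $\gamma\lambda_0+g>0$, and then argue that $\Phi$ is unimodal on $(0,\infty)$ because the ratio $R(m)=-f'(m)/g'(m)$ is strictly monotone, so $\Phi$ is at most two-to-one. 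The paper's route is shorter, but its key implication $E_{k,l}=E_{k,m}\Rightarrow T_l=T_m$ is asserted without justification and is not immediate from $(\ref{eq4.5})$, since $E_{k,\cdot}$ depends on $l$ through both $T_l$ and $l^{2}$. Your route is longer but self-contained: it reduces the entire question to a single explicit monotonicity inequality for $R$, which you rightly flag as the only remaining technical work. In effect you trade the paper's unproved assertion for a concrete calculus lemma about hyperbolic functions.
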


\begin{proof}
We note that the sign of $\lambda_{k,\pm}^*$ follows from (\ref{eq3.16}) and the sign of the constant $E_{k,l}$ can be determined directly from (\ref{eq4.2}).

In order to prove \emph{(2)}, it follows from the arguments in previous section that zero is an eigenvalue of $F_{\eta}(\lambda,E_0,0)$ if and only if $D_{k}(\lambda,E_0)=0$ for any positive integer $k$. It is easy to check that $\lambda=\lambda_{k,\pm}^{*}$ are solutions to $D_{k}(\lambda,E_0)=0$, whereby $\lambda_{k,\pm}^{*}$ are given by (\ref{eq4.4}). Thus, if $\lambda\notin \{\lambda_{k,\pm}^{*} : k\in \mathbb{N}^+\}$, then $F_{\eta}(\lambda,E_0,0)$ is an invertible operator.

Now let's prove \emph{(3)} by letting
$$\lambda_{k,+}^*=\lambda_{l,+}^*~~ \text{or}~~\lambda_{k,-}^*=\lambda_{l,-}^*$$
for some $k\neq l$, we obtain by using algebraic manipulations that
$$E_0^2=E_{k,l}.$$
This means, as $E_0^2=E_{k,l}$, that the kernel space of $F_{\eta}(\lambda_{k,\pm}^*,E_0,0)$ is generated by $\{\cos(kx), \cos(lx)\}\in X$. In fact, we are left to show that there cannot be another integer $m\notin\{k,l\}$ such that $\lambda_{k,+}^*=\lambda_{l,+}^*=\lambda_{m,+}^*$ or $\lambda_{k,-}^*=\lambda_{l,-}^*=\lambda_{m,-}^*$ when $E_0^2=E_{k,l}$. By contradiction arguments, if $\lambda_{k,+}^*=\lambda_{l,+}^*=\lambda_{m,+}^*$ or $\lambda_{k,-}^*=\lambda_{l,-}^*=\lambda_{m,-}^*$ holds. Then we have that $E_0^2=E_{k,l}=E_{k,m}$, which leads to $T_{l}=T_{m}$ which is contradicted due to $m\neq l$.
\end{proof}

\begin{remark} \label{rem2}
By the property of the sequence $(T_k)_k$ of being decreasing, the assumption (\ref{eq4.2}) holds provided $\gamma^2$ is not small enough. Different from the case in \cite{MartinM}, the existence of ripples here is ensured not only by letting the electric field $E_0^2$ be close to $E_{k,l}$ but also by taking the suitable vorticity function $\gamma$.
\end{remark}

In the following, we will show that if (\ref{eq3.16}) and (\ref{eq4.2}) hold and the electric field $E_0^2$ is sufficiently close to the constant $E_{k,l}$ defined by (\ref{eq4.5}) with
$$\frac{k}{l}\in \mathbb{N}^+\setminus\{1\},$$
then there would be secondary bifurcation branches emerging from some of these primary branches obtained in Theorem \ref{thm3.1}. Now we further make a decomposition of Hilbert spaces $X$ and $Y$ by setting
$$
X_{1}:=\{ w\in X: w=\sum_{n=1}^{\infty}a_{n}\cos(knq) \},
$$
$$
X_{2}:=\{ w\in X: w=\sum_{\frac{n}{k}\notin \mathbb{N}}a_{n}\cos(nq) \}
$$
and
$$
Y_{1}:=\{v\in Y: v=\sum_{n=1}^{\infty}a_{n}\cos(knq)\}, ~~~Y_{2}:=\{v\in Y: v=\sum_{\frac{n}{k}\notin \mathbb{N}}a_{n}\cos(nq)\}.
$$

Once the electric field $E_0^*$ is chosen such that $E_0^{*2}=E_{k,l}$ for $k,l\in \mathbb{N}^+$, then it follows from Lemma \ref{lem4.1} that either $\lambda_{k,+}^{*}=\lambda_{l,+}^{*}$ or $\lambda_{k,-}^{*}=\lambda_{l,-}^{*}$.
For convenience, we will denote the bifurcation points by
$$
\lambda_{\pm}^{*}=\lambda_{k,\pm}^{*}=\lambda_{l,\pm}^{*}
$$
Then the operator $F_{\eta}(\lambda_{\pm}^{*},E_0^*,0)$ has a two-dimensional kernel. Moreover,
$$\mathcal{N}(F_{\eta}(\lambda_{\pm}^{*},E_0^*,0))=span\{ x_1,x_2 \},$$
where $x_1:=\cos(kq)$ and $x_2:=\cos(lq)$.
Now let us state the main theorem of this section.

\begin{theorem} \label{thm4.1} (The existence of secondary bifurcation branches)
Assume that (\ref{eq3.16}) and (\ref{eq4.2}) hold and $|E_0-E_0^*|<\delta$ for small $\delta>0$. Then there exists a smooth curve $\mathcal{S}_{E_0}$ that intersects $\mathcal{K}_{k,\pm}$ and the secondary bifurcation curve $\mathcal{S}_{E_0}$ consists only of solutions $(\lambda, \eta)$ of problem (\ref{eq4.1}) with minimal period $\frac{2\pi}{l}$.
\end{theorem}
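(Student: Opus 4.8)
The plan is to reduce the bifurcation equation (\ref{eq4.1}) to the subspace of functions of period $\tfrac{2\pi}{k}$, to locate a point on the primary branch $\mathcal{K}_{k,\pm}$ at which the linearisation degenerates in the \emph{complementary} directions, and then to invoke the secondary bifurcation Theorem~\ref{thm6.4}. Write $E_0^*$ for the value with $E_0^{*2}=E_{k,l}$ and, as in Section~4, set $\lambda_\pm^*:=\lambda_{k,\pm}^*=\lambda_{l,\pm}^*$ at $E_0=E_0^*$; we may assume $k>l$, so that $l\mid k$. \emph{Step 1 (reduction to the symmetric subspace).} If $\eta\in\mathcal{O}\cap X_1$ then the solutions $\psi(\lambda,\eta)$ and $V(E_0,\eta)$ of (\ref{eq2.7})--(\ref{eq2.8}) are $\tfrac{2\pi}{k}$-periodic in $q$, so $F$ maps $\mathbb{R}^2\times(\mathcal{O}\cap X_1)$ into $Y_1$; differentiating (\ref{eq2.7})--(\ref{eq2.8}) at such an $\eta$ in a direction of $X_2$ produces $w,h$ with Fourier support on non-$k$-multiple modes, so $F_\eta(\lambda,E_0,\eta)$ maps $X_1$ into $Y_1$ and $X_2$ into $Y_2$ whenever $\eta\in X_1$. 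On the block $F\colon\mathbb{R}^2\times(\mathcal{O}\cap X_1)\to Y_1$ the kernel of $F_\eta(\lambda_{k,\pm}^*(E_0),E_0,0)$ is one-dimensional, spanned by $\cos(kq)$ (at $E_0=E_0^*$ the second kernel vector $\cos(lq)$ lies in $X_2$, since $l<k$), and the transversality (\ref{eq3.18}) persists; hence Theorem~\ref{thm6.1} yields a real-analytic primary curve $\mathcal{K}_{k,\pm}=\{(\lambda_{k,\pm}(s),\eta(s)):|s|<\varepsilon\}\subset\mathbb{R}\times X_1$ with $\eta(s)=s\cos(kq)+o(s)$, and, exactly as in the proof of Theorem~\ref{thm3.2}(i) (only $\int\cos^3$ and $\int\sin^2\cos$ enter, both vanishing for every $\gamma$), $\lambda_{k,\pm}'(0)=0$.

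\emph{Step 2 (locating the secondary bifurcation point).} Along $\mathcal{K}_{k,\pm}$ put $A_2(s,E_0):=F_\eta(\lambda_{k,\pm}(s),E_0,\eta(s))|_{X_2}\colon X_2\to Y_2$, a Fredholm operator of index zero; at $(s,E_0)=(0,E_0^*)$ its kernel is $\mathrm{span}\{\cos(lq)\}$ by Lemma~\ref{lem4.1}(3), so there is a real-analytic simple eigenvalue $\mu(s,E_0)$ of $A_2$ with $\mu(0,E_0^*)=0$. Since $A_2(0,E_0)$ acts on the mode $l$ by $D_l(\lambda_{k,\pm}^*(E_0),E_0)=-\frac{2l}{\tanh l}\big(\lambda_{k,\pm}^*(E_0)-\lambda_{l,+}^*(E_0)\big)\big(\lambda_{k,\pm}^*(E_0)-\lambda_{l,-}^*(E_0)\big)$, Lemma~\ref{lem4.1}(1) keeps the second factor away from $0$ while the first vanishes precisely at $E_0=E_0^*$; differentiating and using the strict monotonicity of $(T_k)$ together with $\gamma\neq0$ (forced by (\ref{eq4.2}), cf.\ Remark~\ref{rem2}) gives $\partial_{E_0}\mu(0,E_0^*)\neq0$. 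On the other hand, using $\lambda_{k,\pm}'(0)=0$ and $\eta'(0)=\cos(kq)$ one finds $\partial_s\mu(0,E_0^*)=\langle\widetilde l,F_{\eta\eta}(\lambda_\pm^*,E_0^*,0)[\cos(kq),\cos(lq)]\rangle$, with $\widetilde l$ the functional dual to $\cos(lq)$; when $k/l=2$ the product $\cos(kq)\cos(lq)$ carries the mode $l$ and the second-variation formula (\ref{eq5.4}) shows this pairing is nonzero, while for $k/l\ge3$ the first nonvanishing $s$-derivative of $\mu$ is extracted analogously from the higher Fréchet variations of $F$. In all cases the implicit function theorem presents $\{\mu=0\}$ near $(0,E_0^*)$ as a smooth graph $s=s_0(E_0)$ with $s_0(E_0^*)=0$ and $s_0'(E_0^*)\neq0$, so for $0<|E_0-E_0^*|<\delta$ there is a unique small $s_0=s_0(E_0)\neq0$ at which $A_2(s_0,E_0)$ has a simple, transversally crossed zero eigenvalue. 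Set $(\lambda_0,\eta_0):=(\lambda_{k,\pm}(s_0),\eta(s_0))\in\mathcal{K}_{k,\pm}$; it lies at positive distance from $\{\eta=0\}$ since $s_0\neq0$.

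\emph{Step 3 (secondary bifurcation and minimal period).} All hypotheses of Theorem~\ref{thm6.4} hold at $(\lambda_0,\eta_0)$: $F_\eta(\lambda_0,E_0,\eta_0)$ has a simple zero eigenvalue with eigenvector $v_0\in X_2$, hence transverse to the tangent line of $\mathcal{K}_{k,\pm}$ at $(\lambda_0,\eta_0)$ (which lies in $\mathbb{R}\times X_1$); the crossing along $\mathcal{K}_{k,\pm}$ is transverse by Step~2; and the remaining non-degeneracy conditions follow from the same variations. The theorem produces a smooth curve $\mathcal{S}_{E_0}$ of solutions of (\ref{eq4.1}) through $(\lambda_0,\eta_0)$, so $\mathcal{S}_{E_0}$ intersects $\mathcal{K}_{k,\pm}$ and is not contained in $\mathbb{R}\times X_1$. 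Finally, $\eta_0\in X_1$ and $v_0$ both have Fourier support in $l\mathbb{Z}$ (because $l\mid k$, and $A_2$ leaves the subspace of $X_2$ with support in $l\mathbb{Z}$ invariant), so $\mathcal{S}_{E_0}$ lies inside the $F$-invariant space of $\tfrac{2\pi}{l}$-periodic functions; a point $(\lambda,\eta)\in\mathcal{S}_{E_0}$ close to but distinct from $(\lambda_0,\eta_0)$ reads $\eta=\eta_0+t\,v_0+o(t)$ with nonzero $\cos(lq)$-coefficient, hence has minimal period exactly $\tfrac{2\pi}{l}$.

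\emph{Main obstacle.} The decisive and most delicate point is the transversality in Step~2 — that the crossing eigenvalue $\mu$ of the $X_2$-block has nonvanishing $s$-derivative at the degenerate point, which is exactly where the arithmetic hypothesis $k/l\in\mathbb{N}^+\setminus\{1\}$ enters and where the explicit second (and, for $k/l\ge3$, higher) Fréchet variations of $F$ must be computed and shown not to degenerate — together with $\partial_{E_0}\mu(0,E_0^*)\neq0$, which is what forces the secondary bifurcation point $(\lambda_0,\eta_0)$ to detach from the laminar branch as $E_0$ leaves $E_0^*$.
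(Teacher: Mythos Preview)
Your route differs genuinely from the paper's. The paper applies Theorem~\ref{thm6.4} \emph{directly at the trivial solution} $(\lambda_\pm^*,E_0^*,0)$, with $(\lambda,E_0)$ as the two parameters: it simply verifies (H1)--(H8) via the explicit formulae (\ref{eq4.6})--(\ref{eq4.13}), and the abstract theorem then automatically delivers, for each $E_0$ with $0<|E_0-E_0^*|<\delta$, the secondary curve $\mathcal{S}_{E_0}$ emanating from a nontrivial point of $\mathcal{K}_{k,\pm}$. No eigenvalue tracking along the primary branch is needed; everything is read off at $\eta=0$. Your Steps~1--2 instead unpack the mechanism inside Shearer's proof: you build $\mathcal{K}_{k,\pm}$, follow the simple $X_2$-eigenvalue $\mu(s,E_0)$ along it, and locate its zero. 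The derivative conditions you isolate, $\partial_{E_0}\mu(0,E_0^*)\neq0$ and $\partial_s\mu(0,E_0^*)\neq0$, are precisely equivalent to the two $2\times2$ determinants the paper computes in (\ref{eq4.9}) and (\ref{eq4.13}) for (H8); so the underlying calculations coincide, but you trade a single citation of Theorem~\ref{thm6.4} for an explicit implicit-function-theorem construction.

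There is, however, a genuine gap in Step~3. Theorem~\ref{thm6.4} as stated has its hypotheses phrased at the \emph{trivial} solution --- in particular (H1) demands $F(\lambda,\beta,0)=0$ for all $(\lambda,\beta)$ --- so it cannot be invoked ``at $(\lambda_0,\eta_0)$'' with $\eta_0\neq0$. What Step~3 actually requires is a one-parameter Crandall--Rabinowitz argument (Theorem~\ref{thm6.1}) applied after reparametrising the primary branch as the new trivial curve, using the simple transversal crossing of $\mu$ from Step~2; alternatively, and far more economically, you could simply apply Theorem~\ref{thm6.4} at $(\lambda_\pm^*,E_0^*,0)$ as the paper does, since your Step~2 computations are exactly what (H8) asks for. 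Separately, your handling of $k/l\ge3$ is only sketched: when $k-l\neq l$ the product $\cos(kq)\cos(lq)$ carries no $\cos(lq)$ mode, so $\partial_s\mu(0,E_0^*)=0$ and the implicit function theorem does not give the graph $s=s_0(E_0)$ with $s_0'(E_0^*)\neq0$ as you claim; the paper's (H8) computation (\ref{eq4.13}) rests on the same integral $\int_{-\pi}^{\pi}\cos(kq)\cos^2(lq)\,dq$, which is nonzero only for $k=2l$.
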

\begin{figure}[ht] \label{fig4}
\centering
\includegraphics[width=0.75\textwidth]{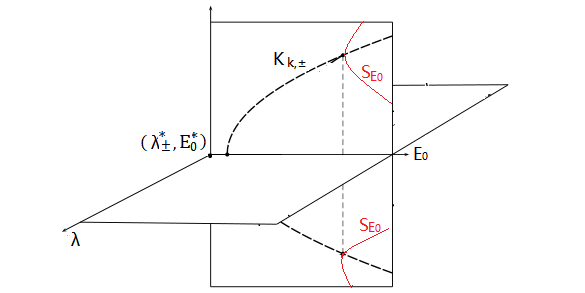}
\centering
\caption{The secondary bifurcation curve $\mathcal{S}_{E_0}$.}
\end{figure}

\begin{proof}
To finish the proof, we need to verify the assumptions (H1)-(H8) in Theorem \ref{thm6.2}. Let us first make a constraint for spaces by
$$
F: \mathbb{R}^2\times X_1\rightarrow Y_1.
$$
Then it follows from (\ref{eq3.17}) that $F(\lambda, E_0, 0)=0$ for any $(\lambda, E_0)\in \mathbb{R}^2$. The Fredholm property of operator $F_{\eta}(\lambda_{\pm}^*,E_0^*,0)$ from $X_1$ to $Y_1$ can been deduced similarly as in Subsection 3.2. In addition, it follows from (\ref{eq3.18}) that
\begin{equation}\label{eq4.6}
F_{\lambda\eta}(\lambda_{\pm}^*,E_0^*,0)[1,x_1]=-\frac{2k}{\tanh(k)}\left(2\lambda_{\pm}^*-\frac{\gamma}{k}\tanh(k)\right)x_1\notin \mathcal{R}(F_{\eta}(\lambda_{\pm}^*,E_0^*,0)).
\end{equation}
It is obvious that $x_{1}=\psi_1=\cos(kq)$ and $x_{2}=\psi_2=\cos(lq)$ in our setting. The fact $x_i\in X_i$ and $\psi_i\in Y_i$ for $i=1,2$ together with the Lemma \ref{lem4.1} ensures that (H1)-(H6) hold naturally. Similarly as (\ref{eq4.6}), we have that
\begin{equation}\label{eq4.7}
F_{\lambda\eta}(\lambda_{\pm}^*,E_0^*,0)[1,x_2]=-\frac{2l}{\tanh(l)}\left(2\lambda_{\pm}^*-\frac{\gamma}{l}\tanh(l)\right)x_2\notin \mathcal{R}(F_{\eta}(\lambda_{\pm}^*,E_0^*,0)),
\end{equation}
which verifies the general transversality condition (H7).

Finally, we are left to prove the nondegeneracy condition (H8) holds. Based on (\ref{eq4.3}), we have that
\begin{eqnarray}\label{eq4.8}
F_{E_0\eta}(\lambda_{\pm}^*,E_0^*, 0)[1,x_1]=-\frac{4k}{\tanh(k)} \epsilon_0E_0^*x_1, \nonumber \\
F_{E_0\eta}(\lambda_{\pm}^*,E_0^*, 0)[1,x_2]=-\frac{4l}{\tanh(l)} \epsilon_0E_0^*x_2.
\end{eqnarray}
With (\ref{eq4.6})-(\ref{eq4.8}) in hands, we obtain that
\begin{eqnarray}  \label{eq4.9}
~& &\left|
\begin{array}{cc}
\langle F_{\lambda \eta}(\lambda_{\pm}^*,E_0^*, 0)[1,\cos(kq)]|\cos(kq) \rangle  ~&~  \langle F_{E_0 \eta}(\lambda_{\pm}^*,E_0^*, 0)[1,\cos(kq)]|\cos(kq) \rangle \\
~ & ~\\
\langle F_{\lambda \eta}(\lambda_{\pm}^*,E_0^*, 0)[1,\cos(lq)]|\cos(lq) \rangle   ~&~ \langle F_{E_0 \eta}(\lambda_{\pm}^*,E_0^*, 0)[1,\cos(lq)]|\cos(lq) \rangle  \\
\end{array}
\right|  \nonumber\\
~\nonumber\\
& &=\frac{8\gamma\epsilon_0E_0^*(T_l-T_k)}{T_kT_l}\neq 0,
\end{eqnarray}
where we use the facts of $k\neq l$ and $\{T_k\}_{k}$ is a decreasing sequence.
On the other hand, it follows from (\ref{eq3.12})-(\ref{eq3.13}) that
\begin{eqnarray} \label{eq4.10}
w_{k,p}(0)=\frac{(\gamma+\lambda)T_k-\lambda}{T_k}  ,\quad h_{k,p}(0)=\left(\frac{1-T_k}{T_k}\right)E_0.
\end{eqnarray}
Combining (\ref{eq4.10}) with (\ref{eq5.4}) (see Subsection 5.2), we have that
\begin{eqnarray} \label{eq4.11}
~& &F_{\eta\eta}(\lambda_{\pm}^*,E_0^*, 0)[\cos(kq),\cos(kq)]
~\nonumber\\
& &=2(\lambda_{\pm}^{*2}-\epsilon_0E_0^{*2}) k^2\sin^2(kq)+6(\lambda_{\pm}^{*2}-\epsilon_0E_0^{*2})\cos^2(kq)     \nonumber\\
& &-2\epsilon_0 \frac{(1-T_k)^2}{T_k^2}E_0^{*2}\cos^2(kq)+2 \frac{\left((\gamma+\lambda_{\pm}^*)T_k-\lambda_{\pm}^*\right)^2}{T_k^2}\cos^2(kq)  \nonumber\\
& & -8\epsilon_0E_0^{*2}\frac{1-T_k}{T_k}\cos^2(kq)     -8 \lambda_{\pm}^*\frac{(\gamma+\lambda_{\pm}^*)T_k-\lambda_{\pm}^*}{T_k}\cos^2(kq)
\end{eqnarray}
and
\begin{eqnarray} \label{eq4.12}
~& &F_{\eta\eta}(\lambda_{\pm}^*,E_0^*, 0)[\cos(kq),\cos(lq)]
~\nonumber\\
& &=2(\lambda_{\pm}^{*2}-\epsilon_0E_0^{*2}) kl\sin(kq)\sin(lq)+6(\lambda_{\pm}^{*2}-\epsilon_0E_0^{*2})\cos(kq)\cos(lq)   \nonumber\\
& &-2\epsilon_0 \frac{(1-T_k)(1-T_l)}{T_kT_l}E_0^{*2}\cos(kq) \cos(lq) \nonumber\\
& & +2\frac{\left((\gamma+\lambda_{\pm}^*)T_k-\lambda_{\pm}^*\right)\left((\gamma+\lambda_{\pm}^*)T_l-\lambda_{\pm}^*\right)}{T_kT_l}\cos(kq)\cos(lq) \nonumber\\
& &-8\epsilon_0E_0^{*2}\frac{1-T_k}{T_k}\cos(kq) \cos(lq)-8 \lambda_{\pm}^*\frac{(\gamma+\lambda_{\pm}^*)T_k-\lambda_{\pm}^*}{T_k}\cos(kq) \cos(lq). \nonumber\\
& &~
\end{eqnarray}
Therefore, it follows from (\ref{eq4.6})-(\ref{eq4.7}) and (\ref{eq4.11})-(\ref{eq4.12}) that
\begin{eqnarray}\label{eq4.13}
~& &\left|
\begin{array}{cc}
\langle F_{\lambda \eta}(\lambda_{\pm}^*,E_0^*, 0)[1,\cos(kq)]|\cos(kq) \rangle   ~&~  \langle F_{\eta\eta}(\lambda_{\pm}^*,E_0^*, 0)[\cos(kx),\cos(kq)]|\cos(kq)  \rangle \\
~ & ~\\
\langle F_{\lambda \eta}(\lambda_{\pm}^*,E_0^*, 0)[1,\cos(lq)]|\cos(lq)  \rangle   ~&~ \langle F_{\eta\eta}(\lambda_{\pm}^*,E_0^*, 0)[\cos(kx),\cos(lq)]|\cos(lq) \rangle  \\
\end{array}
\right| \nonumber\\
~ \nonumber\\
& &=-\frac{2k}{\tanh(k)}\left(2\lambda_{\pm}^*-\frac{\gamma}{k}\tanh(k)\right)
\frac{M}{\pi}\int^{\pi}_{-\pi} \cos(kq) \cos^2(lq)dx\neq0,
\end{eqnarray}
where we use the following facts for $k\neq l$ that
$$
\int^{\pi}_{-\pi} \cos^3(kq)dq=0, \quad \int^{\pi}_{-\pi} \sin^2(kq) \cos(kq)dq=0,\quad \int^{\pi}_{-\pi} \cos(kq) \cos^2(lq)dq\neq 0
$$
and
\begin{eqnarray} \label{eq4.14}
& &M=6(\lambda_{\pm}^{*2}-\epsilon_0E_0^{*2})-2\epsilon_0 \frac{(1-T_k)(1-T_l)}{T_kT_l}E_0^{*2} \nonumber\\
& &+2\frac{\left((\gamma+\lambda_{\pm}^*)T_k-\lambda_{\pm}^*\right)\left((\gamma+\lambda_{\pm}^*)T_l-\lambda_{\pm}^*\right)}{T_kT_l}
-8\epsilon_0E_0^{*2}\frac{1-T_k}{T_k} \nonumber\\
& &-8 \lambda_{\pm}^*\frac{(\gamma+\lambda_{\pm}^*)T_k-\lambda_{\pm}^*}{T_k}.
\end{eqnarray}
Indeed, we can check that $M\neq0$ in (\ref{eq4.14}) by using (\ref{eq4.4}) and (\ref{eq4.5}) for some integers $k\neq l$. Up to now, the assumptions (H1)-(H8) are verified, thus we finish the proof by using the Theorem \ref{thm6.4} in Appendix.
\end{proof}

\begin{remark}
The local stability of primary solution curves $\mathcal{K}_{k,\pm}$ has been given in Subsection 3.3. However, as far as we know that the local stability of secondary bifurcation curve $\mathcal{S}_{E_0}$ is still open. To this end, maybe some new exchange of stability theorems are needed to establish, which is a very interesting project in future.
\end{remark}

\section{Proofs and calculations}
We first give the Green's function $G_2(x,t)$ of the operator $\frac{\partial}{\partial x^2}-k^2$ on the interval $[0,1]$ with Dirichlet boundary conditions and the Green's function $G_1(x,t)$ of the operator $\frac{\partial}{\partial x^2}-k^2$ on the interval $[-1,0]$ with Dirichlet boundary conditions, which are used in Section 3. In addition, we show the second derivative of the abstract operator $F(\lambda, E_0,\eta)$ with responding to $\eta$, which plays a key role in checking the nondegeneracy condition in Section 4. Moreover, we also give the third derivative of the abstract operator $F(\lambda, E_0,\eta)$ with responding to $\eta$, which is used to judge the direction of bifurcation.
\subsection{\bf The Green's functions}

To get the Green's function $G_2(x,t)$, let us first consider the following ODE
\begin{eqnarray}
\left\{\begin{array}{llll}{\left(\frac{\partial}{\partial x^2}-k^2\right)G_1(x,t)=\delta(x-t)}, \\
{G_2(0,t)=0,\quad G_2(1,t)=0} ,\end{array}\right. \nonumber
\end{eqnarray}
where $\delta$ is the Dirac function. For $x<t$, by using the boundary condition $G_1(0,t)=0$, we have that
$$
G_2(x,t)=A(t)\left(e^{kx}-e^{-kx}\right).
$$
For $x>t$, by using the boundary condition $G_1(1,t)=0$, we have that
$$
G_2(x,t)=B(t)\left(e^{kx}-e^{k(2-x)}\right).
$$
Then, for $x=t$, we have that
$$
A(t)\left(e^{kt}-e^{-kt}\right)=B(t)\left(e^{kt}-e^{k(2-t)}\right)
$$
and
$$
B(t)\frac{\partial\left(e^{kx}-e^{k(2-x)}\right) }{\partial x}\bigg|_{t}-A(t)\frac{\partial\left(e^{kx}-e^{-kx}\right) }{\partial x}\bigg|_{t}=1,
$$
which gives that
$$
A(t)=\frac{e^{kt}-e^{k(2-t)}}{2k(e^{2k}-1)}, \quad B(t)=\frac{e^{kt}-e^{-kt}}{2k(e^{2k}-1)}=\frac{\sinh(kt)}{k(e^{2k}-1)}.
$$
Taking $A(t)$ and $B(t)$ into the formulas above, we have that
\begin{eqnarray}
G_2(x,t)=-\frac{1}{k\sinh(k)}\left\{\begin{array}{llll}{\sinh(kx)\sinh(k(1-t))} & {\text { for }~x\leq t}, \\
~\\
{\sinh(k(1-x))\sinh(kt)} & { \text { for }~x\geq t}.\end{array}\right. \nonumber
\end{eqnarray}

Similarly, by considering the ODE
\begin{eqnarray}
\left\{\begin{array}{llll}{\left(\frac{\partial}{\partial x^2}-k^2\right)G_1(x,t)=\delta(x-t)}, \\
{G_1(-1,t)=0,\quad G_1(0,t)=0} ,\end{array}\right. \nonumber
\end{eqnarray}
we can obtain that
\begin{eqnarray}
G_1(p,r)=\frac{1}{k\sinh(k)}\left\{\begin{array}{llll}{\sinh(k(1+x))\sinh(kt)} & {\text { for }~x\leq t}, \\
~\\
{\sinh(kx))\sinh(k(1+t))} & { \text { for }~x\geq t}.\end{array}\right. \nonumber
\end{eqnarray}

\subsection{\bf The second and third derivative}
It is known by letting $d=1$ that
\begin{eqnarray}  \label{eq5.1}
F(\lambda,E_0,\eta): & & =\psi_q^2+\frac{\eta'^2+1}{(1+\eta)^2}\psi_p^2-\frac{2\eta'}{1+\eta}\psi_q\psi_p+2g\eta-2\sigma\frac{\eta''}{\left(1+\eta'^2\right)^\frac{3}{2}}\nonumber\\
& &-\epsilon_0\left(V_q^2+\frac{\eta'^2+1}{(1-\eta)^2}V_p^2-\frac{2\eta'}{1-\eta}V_qV_p\right)\bigg|_{p=0}-Q=0, \nonumber\\
\end{eqnarray}
where $F: \mathbb{R}^2\times X\rightarrow Y$, $V=V(E_0,\eta), \psi=\psi(\lambda,\eta)$
and $F$ is analytic in the open set $\mathcal{O}$. Therefore, we have that
\begin{eqnarray} \label{eq5.2}
~& &F_{\eta\eta}(\lambda_{\pm}^*,E_0^*, 0)[\eta,\eta]
~\nonumber\\
& &=\left(F_{\eta\eta}[\eta,\eta]+F_{\eta V}[\eta,\frac{\partial V}{\partial\eta}[\eta]]+
F_{\eta\psi}[\eta,\frac{\partial\psi}{\partial\eta}[\eta]\right)\bigg|_{(\lambda_{\pm}^*,E_0^*, 0),p=0}  \nonumber\\
& &+\left(F_{V\eta}[\frac{\partial V}{\partial\eta}[\eta],\eta]+F_{VV}[\frac{\partial V}{\partial\eta}[\eta],\frac{\partial V}{\partial\eta}[\eta]]+
F_{V\psi}[\frac{\partial V}{\partial\eta}[\eta],\frac{\partial \psi}{\partial\eta}[\eta]]\right)\bigg|_{(\lambda_{\pm}^*,E_0^*, 0),p=0}  \nonumber\\
& &+\left(F_{\psi\eta}[\frac{\partial \psi}{\partial\eta}[\eta],\eta]+F_{\psi V}[\frac{\partial \psi}{\partial\eta}[\eta],\frac{\partial V}{\partial\eta}[\eta]]+
F_{\psi\psi}[\frac{\partial \psi}{\partial\eta}[\eta],\frac{\partial \psi}{\partial\eta}[\eta]]\right)\bigg|_{(\lambda_{\pm}^*,E_0^*, 0),p=0}  \nonumber\\
& &=\left(F_{\eta\eta}[\eta,\eta]+F_{VV}[\frac{\partial V}{\partial\eta}[\eta],\frac{\partial V}{\partial\eta}[\eta]]+F_{\psi\psi}[\frac{\partial \psi}{\partial\eta}[\eta],\frac{\partial \psi}{\partial\eta}[\eta]]\right)\bigg|_{(\lambda_{\pm}^*,E_0^*, 0),p=0}  \nonumber\\
& &+2\left(F_{\eta V}[\eta,\frac{\partial V}{\partial\eta}[\eta]]+
F_{\eta\psi}[\eta,\frac{\partial \psi}{\partial\eta}[\eta]]\right)\bigg|_{(\lambda_{\pm}^*,E_0^*, 0),p=0}  \nonumber\\
& &=2\left(\overline{\psi}_{p}^2\left(\eta'^{2}+3\eta^2\right)-\epsilon_{0}\overline{V}_{p}^2\left(\eta'^{2}+3\eta^2\right)
-\epsilon_{0}\left(h_q^2+h_p^2\right)+w_q^2+w_p^2\right)\bigg|_{(\lambda_{\pm}^*,E_0^*, 0),p=0}  \nonumber\\
& &+4\left(-2\epsilon_0\overline{V}_{p}\eta h_{p}+\epsilon_0\overline{V}_{p}\eta'h_q-2\overline{\psi}_{p}\eta w_{p}-\overline{\psi}_{p}\eta' w_{q}\right)\bigg|_{(\lambda_{\pm}^*,E_0^*, 0),p=0}
\end{eqnarray}
It follows from (\ref{eq2.5}) that
$$
\overline{V}=E_0 y=E_0(p(1-\eta)+\eta )
$$
and
 $$
\overline{\psi}=\frac{\gamma y^2}{2}+\lambda y=\frac{\gamma}{2}\left(p(1+\eta)+\eta\right)^2+\lambda\left(p(1+\eta)+\eta\right).
$$
Thus, the formula (\ref{eq5.2}) would become
\begin{eqnarray} \label{eq5.3}
~& &F_{\eta\eta}(\lambda_{\pm}^*,E_0^*, 0)[\eta,\eta]
~\nonumber\\
& &=2\left(\lambda_{\pm}^{*2}-\epsilon_{0}E_0^{*2}\right)\left(\eta'^{2}+3\eta^2\right)
-2\epsilon_{0}\left(h_q^2(q,0)+h_p^2(q,0)\right)\bigg|_{(\lambda_{\pm}^*,E_0^*, 0)}  \nonumber\\
& &+2w_q^2(q,0)+2w_p^2(q,0)-8\epsilon_0E_0^*\eta h_{p}(q,0)+4\epsilon_0E_0^*\eta'h_q(q,0)\bigg|_{(\lambda_{\pm}^*,E_0^*, 0)}  \nonumber\\
& &-8\lambda_{\pm}^*\eta w_{p}(q,0)-4\lambda_{\pm}^*\eta' w_{q}(q,0)\bigg|_{(\lambda_{\pm}^*,E_0^*, 0)}.
\end{eqnarray}
In addition, from (\ref{eq3.10}) and (\ref{eq3.11}), we have that
\begin{eqnarray}
w_{k}(0)=0  ,\quad h_{k}(0)=0,   \nonumber
\end{eqnarray}
which implies that
\begin{eqnarray} \label{eq5.4}
~& &F_{\eta\eta}(\lambda_{\pm}^*,E_0^*, 0)[\eta,\eta]
~\nonumber\\
& &=2\left(\lambda_{\pm}^{*2}-\epsilon_{0}E_0^{*2}\right)\left(\eta'^{2}+3\eta^2\right)
-2\epsilon_{0}\left(h_q^2(q,0)+h_p^2(q,0)\right)\bigg|_{(\lambda_{\pm}^*,E_0^*, 0)}  \nonumber\\
& &+2w_q^2(q,0)+2w_p^2(q,0)-8\epsilon_0E_0^*\eta h_{p}(q,0)+4\epsilon_0E_0^*\eta'h_q(q,0)\bigg|_{(\lambda_{\pm}^*,E_0^*, 0)}  \nonumber\\
& &-8\lambda_{\pm}^*\eta w_{p}(q,0)-4\lambda_{\pm}^*\eta' w_{q}(q,0)\bigg|_{(\lambda_{\pm}^*,E_0^*, 0)}\nonumber\\
& &=2\left(\lambda_{\pm}^{*2}-\epsilon_{0}E_0^{*2}\right)\left(\eta'^{2}+3\eta^2\right)
-2\epsilon_{0}h_p^2(q,0)\bigg|_{(\lambda_{\pm}^*,E_0^*, 0)}  \nonumber\\
& &+2w_p^2(q,0)-8\epsilon_0E_0^*\eta h_{p}(q,0)-8\lambda_{\pm}^*\eta w_{p}(q,0)\bigg|_{(\lambda_{\pm}^*,E_0^*, 0)}.
\end{eqnarray}

In addition, it is obvious that
\begin{eqnarray} \label{eq5.5}
~& &F_{\eta\eta\eta}(\lambda_{\pm}^*,E_0^*, 0)[\eta,\eta,\eta]
~\nonumber\\
& &=\left(F_{\eta\eta\eta}[\eta,\eta,\eta]+F_{VVV}[\frac{\partial V}{\partial\eta}[\eta],\frac{\partial V}{\partial\eta}[\eta],\frac{\partial V}{\partial\eta}[\eta]]+F_{\psi\psi\psi}[\frac{\partial \psi}{\partial\eta}[\eta],\frac{\partial \psi}{\partial\eta}[\eta]],\frac{\partial \psi}{\partial\eta}[\eta]\right)\bigg|_{(\lambda_{\pm}^*,E_0^*, 0),p=0}  \nonumber\\
& &+3\left(F_{\eta\eta V}[\eta,\eta,\frac{\partial V}{\partial\eta}[\eta]]+
F_{\eta\eta\psi}[\eta,\eta,\frac{\partial \psi}{\partial\eta}[\eta]]\right)\bigg|_{(\lambda_{\pm}^*,E_0^*, 0),p=0}  \nonumber\\
& &+3\left(F_{\eta V V}[\eta,\frac{\partial V}{\partial\eta}[\eta],\frac{\partial V}{\partial\eta}[\eta]]+
F_{\eta\psi\psi}[\eta,\frac{\partial \psi}{\partial\eta}[\eta],\frac{\partial \psi}{\partial\eta}[\eta]]\right)\bigg|_{(\lambda_{\pm}^*,E_0^*, 0),p=0}
\end{eqnarray}
Similarly, considering the $\overline{V}$, $\overline{\psi}$ and $w_{k}(0)=0, h_{k}(0)=0$, then we have that
\begin{eqnarray} \label{eq5.6}
~& &F_{\eta\eta\eta}(\lambda_{\pm}^*,E_0^*, 0)[\eta,\eta,\eta]
~\nonumber\\
& &=\left(\overline{\psi}_{p}^2\left(-12\eta'^{2}\eta-24\eta^3\right)-\epsilon_{0}\overline{V}_{p}^2\left(12\eta'^{2}\eta+24\eta^3\right)
+18\sigma\eta''\eta'^2\right)\bigg|_{(\lambda_{\pm}^*,E_0^*, 0),p=0}  \nonumber\\
& &+12\left(\eta'^2+3\eta^2\right)\overline{\psi}_{p}w_p-12\epsilon_0\left(\eta'^2+3\eta^2\right)\overline{V}_{p}h_p-4\eta w_p^2-4\epsilon_0\eta h_p^2\bigg|_{(\lambda_{\pm}^*,E_0^*, 0),p=0}   \nonumber\\
& &=\left(-\lambda_{\pm}^{*2}\left(12\eta'^{2}\eta+24\eta^3\right)-\epsilon_{0}E_0^{*2}\left(12\eta'^{2}\eta+24\eta^3\right)
+18\sigma\eta''\eta'^2\right)\bigg|_{(\lambda_{\pm}^*,E_0^*, 0),p=0}  \nonumber\\
& &+12\left(\eta'^2+3\eta^2\right)\lambda_{\pm}^*w_p-12\epsilon_0\left(\eta'^2+3\eta^2\right)E_0^*h_p-4\eta w_p^2-4\epsilon_0\eta h_p^2\bigg|_{(\lambda_{\pm}^*,E_0^*, 0),p=0}  \nonumber\\
& &=-12\left(\lambda_{\pm}^{*2}+ \epsilon_{0}E_0^{*2} \right)\eta'^{2}\eta-24\left(\lambda_{\pm}^{*2}+ \epsilon_{0}E_0^{*2} \right)\eta^3
+18\sigma\eta''\eta'^2\bigg|_{(\lambda_{\pm}^*,E_0^*, 0),p=0}  \nonumber\\
& &+12\lambda_{\pm}^*\eta'^2w_p+36\lambda_{\pm}^*\eta^2w_p-12\epsilon_0E_0^*\eta'^2h_p-36\epsilon_0E_0^*\eta^2h_p\bigg|_{(\lambda_{\pm}^*,E_0^*, 0),p=0} \nonumber\\
& &-4\eta w_p^2-4\epsilon_0\eta h_p^2\bigg|_{(\lambda_{\pm}^*,E_0^*, 0),p=0}.
\end{eqnarray}

\section*{Appendix: Quoted results}
This appendix not only collects the remarkable Crandall-Rabinowitz local bifurcation theorem \cite{CrandallR}, which are designed to deal with the case of linearized operator with one dimensional kernel, but also gives the secondary bifurcation theorem due to Shearer \cite{Shearer}. In addition, we give the definition of stability and recall the stability exchange theorem due to Crandall and Rabinowitz \cite{CrandallR1} and some useful formulas used to judge the direction of local bifurcation.

\begin{theorem}(Local bifurcation theorem \cite{CrandallR}) \label{thm6.1}
Let $X$ and $Y$ be Banach spaces and $F: \mathbb{R}\times X \rightarrow Y$ be a $C^k$ function with $k\geq 2$. Let $\mathcal{N}(T)$ and $\mathcal{R}(T)$ denote the null space and range of any operator $T$. Suppose that

(H1) $F(\lambda,0)=0$ for all $\lambda\in \mathbb{R}$;

(H2) For some $\lambda_*\in \mathbb{R}$, $ F_{x}\left(\lambda_*, 0\right)$ is a Fredholm operator with $\mathcal{N} \left( F_{x}\left(\lambda_*, 0\right)\right)$ and $Y /\mathcal{R}\left( F_{x}\left(\lambda_*, 0\right)\right)$ are $1$-dimensional and
the null space generated by $x_*$, and the transversality condition
\begin{equation}
F_{\lambda,x}\left(\lambda_*, 0\right)[1,x_*]\not\in\mathcal{R}\left( F_{x}\left(\lambda_*, 0\right)\right)\nonumber
\end{equation}
holds, where $\mathcal{N} \left( F_{x}\left(\lambda_*, 0\right)\right)$ and $\mathcal{R}\left( F_{x}\left(\lambda_*, 0\right)\right)$ denote null space and range space of $ F_{x}\left(\lambda_*, 0\right)$, respectively.

Then $\lambda_*$ is a bifurcation point in the sense that there exists $\varepsilon>0$ and a primary branch $\mathcal{P}$ of solutions
$$
\{ (\lambda,x)=(\Lambda(s),s\psi(s)):s\in\mathbb{R}, |s|<\varepsilon \}\subset\mathbb{R}\times X
$$
with $F(\lambda,x)=0$, $\Lambda(0)=0$, $\psi(0)=x_*$ and the maps
$$
s\mapsto\Lambda(s)\in\mathbb{R},~~~~s\mapsto s\psi(s)\in X
$$
are of class $C^{k-1}$ on $(-\varepsilon, \varepsilon)$.
\end{theorem}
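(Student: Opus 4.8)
The plan is to prove this classical Crandall--Rabinowitz local bifurcation theorem by a Lyapunov--Schmidt reduction carried out after a rescaling that eliminates the known trivial branch. First I would record the functional-analytic decompositions made available by hypothesis (H2). Since $\mathcal{N}\big(F_{x}(\lambda_*,0)\big)=\operatorname{span}\{x_*\}$ is one-dimensional and $\mathcal{R}\big(F_{x}(\lambda_*,0)\big)$ has codimension one, choose a closed complement $Z$ with $X=\operatorname{span}\{x_*\}\oplus Z$ and a one-dimensional complement $W$ with $Y=\mathcal{R}\big(F_{x}(\lambda_*,0)\big)\oplus W$, and let $P\colon Y\to\mathcal{R}\big(F_{x}(\lambda_*,0)\big)$ and $Q=\mathrm{Id}-P\colon Y\to W$ be the associated bounded projections. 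The basic structural fact, used repeatedly, is that $PF_{x}(\lambda_*,0)\big|_{Z}\colon Z\to\mathcal{R}\big(F_{x}(\lambda_*,0)\big)$ is a bounded bijection, hence an isomorphism by the open mapping theorem.

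Next I would set up the rescaling. Seeking nontrivial solutions of the form $x=s(x_*+\zeta)$ with $s\in\mathbb{R}$ and $\zeta\in Z$, and using (H1) together with the fundamental theorem of calculus, one has for $s\neq 0$
$$
s^{-1}F\big(\lambda,s(x_*+\zeta)\big)=\int_{0}^{1}F_{x}\big(\lambda,ts(x_*+\zeta)\big)\big[x_*+\zeta\big]\,dt,
$$
so the right-hand side defines a map $G(\lambda,s,\zeta)$ that extends $C^{k-1}$-smoothly across $s=0$, with $G(\lambda,0,\zeta)=F_{x}(\lambda,0)\big[x_*+\zeta\big]$, and the equation $G=0$ captures exactly the nontrivial solutions near $(\lambda_*,0)$. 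Because $x_*\in\mathcal{N}\big(F_{x}(\lambda_*,0)\big)$ we get $G(\lambda_*,0,0)=0$. Splitting $G=0$ into $PG=0$ and $QG=0$, I would solve the first for $\zeta$: its partial derivative in $\zeta$ at $(\lambda_*,0,0)$ is $PF_{x}(\lambda_*,0)\big|_{Z}$, the isomorphism above, so the implicit function theorem supplies a $C^{k-1}$ map $\zeta=\zeta(\lambda,s)$ with $\zeta(\lambda_*,0)=0$ and $PG\big(\lambda,s,\zeta(\lambda,s)\big)\equiv 0$.

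It then remains to analyze the scalar bifurcation equation $g(\lambda,s):=QG\big(\lambda,s,\zeta(\lambda,s)\big)=0$ (identifying $W$ with $\mathbb{R}$), which is $C^{k-1}$. One finds $g(\lambda_*,0)=QF_{x}(\lambda_*,0)[x_*]=0$, and upon differentiating in $\lambda$ at $(\lambda_*,0)$ the contribution of $\partial_{\zeta}G$ composed with $\partial_{\lambda}\zeta$ vanishes, since $\partial_{\lambda}\zeta(\lambda_*,0)\in Z$ and $QF_{x}(\lambda_*,0)$ annihilates $Z$; hence $\partial_{\lambda}g(\lambda_*,0)=QF_{\lambda x}(\lambda_*,0)[1,x_*]\neq 0$ precisely by the transversality condition in (H2). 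A second application of the implicit function theorem to $g=0$ at $(\lambda_*,0)$ yields a $C^{k-1}$ function $\lambda=\Lambda(s)$ with $\Lambda(0)=\lambda_*$; setting $\psi(s):=x_*+\zeta\big(\Lambda(s),s\big)$, the desired branch is $\{(\Lambda(s),s\psi(s)):|s|<\varepsilon\}$, with $\psi(0)=x_*$ and the stated $C^{k-1}$ regularity, while the uniqueness in the two implicit function theorem steps gives the local description of the solution set.

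The step I expect to be the main obstacle is the rescaling/division argument: one must verify that $G$, defined a priori only for $s\neq 0$, is genuinely $C^{k-1}$ jointly in $(\lambda,s,\zeta)$ after the extension to $s=0$, which follows from differentiating the integral representation above under the integral sign and costs exactly one order of smoothness --- this is the reason $F\in C^{k}$ produces only a $C^{k-1}$ branch. Once this is in place, the remainder is a careful but routine bookkeeping of two applications of the implicit function theorem and a check that the transversality hypothesis survives the Lyapunov--Schmidt reduction unchanged.
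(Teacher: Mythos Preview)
Your proof is correct and follows the classical Lyapunov--Schmidt reduction argument due to Crandall and Rabinowitz. Note, however, that the paper does not supply its own proof of this theorem: it is stated in the Appendix as a quoted result from \cite{CrandallR} and used as a black box in Section~3. So there is no ``paper's proof'' to compare against; you have reconstructed the standard argument.

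A few minor remarks. First, the statement as quoted in the paper contains what appears to be a typo: it writes $\Lambda(0)=0$, whereas your proof correctly yields $\Lambda(0)=\lambda_*$ (and this is what is used in Theorem~\ref{thm3.1}). Second, your treatment of the regularity loss is exactly right: the integral representation
\[
G(\lambda,s,\zeta)=\int_0^1 F_x\big(\lambda,ts(x_*+\zeta)\big)[x_*+\zeta]\,dt
\]
is only $C^{k-1}$ because one derivative of $F$ has already been spent, and differentiating under the integral sign is justified by the $C^k$ hypothesis on $F$. Third, your verification that $\partial_\lambda g(\lambda_*,0)=QF_{\lambda x}(\lambda_*,0)[1,x_*]$ is clean; the only point to be careful about is that one must also check $\partial_\lambda\zeta(\lambda_*,0)$ exists before invoking $QF_x(\lambda_*,0)|_Z=0$, but this follows immediately from the implicit function theorem applied to $PG=0$.
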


\begin{theorem}(The formula of bifurcation direction \cite{Kielhofer}) \label{thm6.2}
If $F$ satisfies the hypotheses of Theorem \ref{thm6.1}, then
\begin{equation}
\lambda'(0)=-\frac{\left\langle l,F_x^{(2)}\left(\lambda_*,0\right)x_*^2\right\rangle}{2\left\langle l, F_{\lambda x}(\lambda_*,0)x_*\right\rangle},\nonumber
\end{equation}
where $l\in X^*$ satisfying $\mathcal{N}(l)=\mathcal{R}\left(D_x F(\lambda_*,0)\right)$, and $X^*$ being the dual space of $X$.
Furthermore, for $n\geq2$, if $F_x^{(j)}(\lambda_*,0)x_*^j=0$ for $j\in\{1,\ldots,n\}$, then $\lambda^{(j)}(0)=0$, $\psi^{(j)}(0)=0$ for $j\in\{1,\ldots,n-1\}$ and
\begin{equation}
\lambda^{(n)}(0)=-\frac{\left\langle l,F_x^{(n+1)}\left(\lambda_*,0\right)x_*^{n+1}\right\rangle}{(n+1)\left\langle l, F_{\lambda x}(\lambda_*,0)x_*\right\rangle},\nonumber
\end{equation}
where $F_x^{(j)}(\lambda_*,0)x_*^j$ means the value of the $j$-th Fr\'{e}chet derivative of $F(\lambda_*,x)$ with respect to $x$ at $(\lambda_*,0)$
evaluated at the $j$-tuple each of whose entries is $x_*$.
\end{theorem}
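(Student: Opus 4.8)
The plan is to extract the derivatives of the bifurcation parameter along the branch by differentiating the defining identity $F(\lambda(s),x(s))\equiv 0$ supplied by Theorem~\ref{thm6.1} and testing the result against the functional $l$. I write that branch as $(\lambda(s),x(s))$ with $x(s)=s\psi(s)$, $\lambda(0)=\lambda_*$ and $\psi(0)=x_*$; by the Lyapunov--Schmidt construction underlying Theorem~\ref{thm6.1} one may moreover take $\psi(s)=x_*+\phi(s)$ with $\phi(s)$ in a fixed closed complement $Z$ of $\mathcal{N}(F_x(\lambda_*,0))=\mathrm{span}\{x_*\}$ and $\phi(0)=0$, so that $\psi^{(j)}(0)\in Z$ for all $j\ge 1$. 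Two facts drive the whole argument. First, differentiating (H1) in $\lambda$ gives $\partial_\lambda^kF(\lambda_*,0)=0$ for every $k\ge 0$, so any monomial assembled purely from $\lambda$-derivatives of $F$ drops out. Second, because $\mathcal{N}(l)=\mathcal{R}(F_x(\lambda_*,0))$, the functional $l$ annihilates every term carrying a factor $F_x(\lambda_*,0)(\cdot)$, while (H2) ensures $\langle l,F_{\lambda x}(\lambda_*,0)[1,x_*]\rangle\neq 0$.

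For the base formula I would differentiate $G(s):=F(\lambda(s),x(s))$ twice. Using $x'(0)=x_*$, $x''(0)=2\psi'(0)$ and $F_\lambda(\lambda_*,0)=F_{\lambda\lambda}(\lambda_*,0)=0$, the identity $G''(0)=0$ collapses to
\begin{equation}
2\lambda'(0)F_{\lambda x}(\lambda_*,0)[1,x_*]+F_{xx}(\lambda_*,0)[x_*,x_*]+2F_x(\lambda_*,0)\psi'(0)=0.\nonumber
\end{equation}
Testing with $l$ removes the last summand, and solving for $\lambda'(0)$ produces exactly the stated quotient, the denominator being nonzero by (H2).

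For the higher-order statement I would induct on $m$. The structural claim, to be proved from the partition form of the multivariate chain rule (Fa\`a di Bruno), is that whenever $\lambda^{(i)}(0)=0$ and $\psi^{(i)}(0)=0$ for all $i<m$ (equivalently $x^{(i)}(0)=0$ for $2\le i\le m$ while $x'(0)=x_*$), the identity $G^{(m+1)}(0)=0$ reduces to
\begin{equation}
F_x^{(m+1)}(\lambda_*,0)x_*^{m+1}+(m+1)\lambda^{(m)}(0)F_{\lambda x}(\lambda_*,0)[1,x_*]+(m+1)F_x(\lambda_*,0)\psi^{(m)}(0)=0.\nonumber
\end{equation}
Under the hypothesis $F_x^{(j)}(\lambda_*,0)x_*^j=0$ for $j\le n$, the first summand vanishes at every level $m+1\le n$; testing with $l$ then forces $\lambda^{(m)}(0)=0$, after which the residual relation $F_x(\lambda_*,0)\psi^{(m)}(0)=0$ places $\psi^{(m)}(0)$ in $\mathcal{N}(F_x(\lambda_*,0))\cap Z=\{0\}$, so $\psi^{(m)}(0)=0$. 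Sweeping $m$ from $1$ (vacuous hypothesis) to $n-1$ establishes all the vanishing assertions, and at $m=n$ the same reduced identity---now with $F_x^{(n+1)}(\lambda_*,0)x_*^{n+1}$ no longer assumed to vanish---yields the formula for $\lambda^{(n)}(0)$ upon testing with $l$.

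The principal obstacle is the combinatorial bookkeeping behind the structural claim: one must verify both that the only survivors at order $m+1$ are the three displayed terms and that their coefficients are exactly $1$, $m+1$ and $m+1$. The cases $m+1=2,3$ confirm these constants by direct differentiation, and in general the term $F_{\lambda x}[\lambda^{(m)},x']$ corresponds to the $m+1$ partitions of $\{1,\dots,m+1\}$ into a size-$m$ block and a singleton, whereas the top term $F_x^{(m+1)}x_*^{m+1}$ corresponds to the single all-singletons partition; every other partition forces a vanishing factor (a pure $\lambda$-derivative of $F$, some $\lambda^{(i)}(0)$ with $i<m$, or some $x^{(i)}(0)$ with $2\le i\le m$). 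A secondary point needing care is the normalization $\psi^{(j)}(0)\in Z$: without it the relation $F_x(\lambda_*,0)\psi^{(j)}(0)=0$ would only give $\psi^{(j)}(0)\in\mathrm{span}\{x_*\}$, so the clean conclusion $\psi^{(j)}(0)=0$ genuinely relies on the parametrization of the branch.
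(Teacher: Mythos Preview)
The paper does not prove this theorem: it is listed in the Appendix under ``Quoted results'' and is attributed to Kielh\"ofer \cite{Kielhofer} without any argument. So there is no proof in the paper to compare against.

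That said, your approach is the standard one and is correct. Differentiating the identity $F(\lambda(s),x(s))\equiv 0$, using (H1) to kill pure $\lambda$-derivatives, and testing with $l$ to eliminate range terms is exactly how these formulas are derived (and is essentially what one finds in Kielh\"ofer's book). Your treatment of the inductive step is accurate: once $\lambda^{(i)}(0)=0$ and $\psi^{(i)}(0)=0$ for $i<m$, the only surviving partitions in the Fa\`a di Bruno expansion at order $m+1$ are the all-singletons one (giving $F_x^{(m+1)}x_*^{m+1}$), the $\binom{m+1}{1}=m+1$ partitions with one block of size $m$ carrying $\lambda^{(m)}$ and one singleton carrying $x'$ (giving $(m+1)\lambda^{(m)}(0)F_{\lambda x}[1,x_*]$), and the $\binom{m+1}{m}=m+1$ partitions contributing $F_x x^{(m+1)}(0)=(m+1)F_x\psi^{(m)}(0)$. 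You are also right to flag that the conclusion $\psi^{(j)}(0)=0$ (rather than merely $\psi^{(j)}(0)\in\mathrm{span}\{x_*\}$) depends on the Lyapunov--Schmidt normalization $\psi(s)-x_*\in Z$; this is implicit in the parametrization produced by Theorem~\ref{thm6.1}.

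One minor remark: the functional $l$ annihilating $\mathcal{R}(F_x(\lambda_*,0))$ naturally lives in $Y^*$, not $X^*$ as written in the paper's statement; your use of it is consistent with $l\in Y^*$, which is the correct reading.
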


Now let us recall the concept of stability and the definition of $K-$simple eigenvalue introduced in \cite{CrandallR1}. The operator equation $F(\lambda,x)=0$ can be regarded as the equilibrium form of the evolution equation
\begin{eqnarray}
\frac{dx}{dt}=F(\lambda,x). \nonumber
\end{eqnarray}
Assume that $F\left(\lambda_0, x_0\right)=0$. If all of eigenvalues of $F_x\left(\lambda_0, x_0\right)$ are negative, then the solution $x_0$ is called asymptotically linear stable solution of $\frac{dx}{dt}=F(\lambda,x)$. If there exists a positive eigenvalue of $F_x\left(\lambda_0, x_0\right)$, $x_0$ is called unstable. If there exists a zero eigenvalue of $F_x\left(\lambda_0, x_0\right)$, $x_0$ is called neutral stable.

\begin{definition}\label{def1}

Let $T,K$: $X\rightarrow Y$ be two bounded linear operators from a real Banach space $X$ to another one $Y$. A complex number $\beta$ is called a $K-$simple eigenvalue of $T$ if
$$
dim \mathcal{N}(T-\beta K)=1=codim \mathcal{R}(T-\beta K)
$$
and
$$
K \psi^* \notin \mathcal{R}(T-\beta K) ~~for ~~0\neq \psi^*\in \mathcal{N}(T-\beta K).
$$
\end{definition}

If $K$ is the identity operator, then $K-$simple eigenvalue is called simple.
Next, let us show the Crandall-Rabinowitz exchange of stability theorem \cite{CrandallR1}.

\begin{theorem} \label{thm6.3}

Let $X$ and $Y$ be real Banach spaces and let $K, T: X\rightarrow Y$ be two bounded linear operators. Assume $F: \mathbb{R}\times X\rightarrow Y$ is $C^{2}$ near $\left(\lambda_*,0\right)\in \mathbb{R}\times X$ with $F(\lambda,0)=0$ for $\left\vert\lambda_*-\lambda\right\vert$ sufficiently small. Let $T=F_x\left(\lambda_*,0\right)$. If $\beta=0$ is a $F_{\lambda x}\left(\lambda_*,0\right)-$simple eigenvalue of operator $T$ and a $K-$simple eigenvalue of $T$, then there exists locally a curve $(\lambda(s),x(s))\in \mathbb{R}\times X$ such that
$$
(\lambda(0),x(0))=\left(\lambda_*,0\right)~~and ~~F(\lambda(s),x(s))=0.
$$
Moreover, if $F(\lambda,x)=0$ with $x\neq 0$ and $(\lambda,x)$ near $\left(\lambda_*,0\right)$, then
$$
(\lambda,x)=(\lambda(s),x(s))~~for~~some~~s\neq 0.
$$
Furthermore, there are eigenvalues $\beta(s)$, $\beta_{triv}(\lambda)\in \mathbb{R}$ with eigenvectors $\psi(s)$, $\psi_{triv}(\lambda)\in X $, such that
\begin{eqnarray}
F_{x}\left(\lambda(s),x(s)\right)\psi(s)=\beta(s)K\psi(s),\nonumber
\end{eqnarray}
\begin{eqnarray}
F_{x}\left(\lambda,0\right)\psi_{triv}(\lambda)=\beta_{triv}(\lambda)K\psi_{triv}(\lambda)\nonumber
\end{eqnarray}
with
\begin{align*}
\beta(0)=\beta_{triv}\left(\lambda_*\right)=0,~~\psi(0)=\psi_{triv}\left(\lambda_*\right)=\psi^*.
\end{align*}
Each curve is $C^{1}$ if $F$ is $C^{2}$, then
\begin{eqnarray}
\frac{d\beta_{triv}\left(\lambda\right)}{d\lambda}|_{\lambda=\lambda_*}\neq 0,~~\lim_{s\rightarrow 0,\beta\left(s\right)\neq 0}\frac{s\lambda'(s)}{\beta(s)}=-\frac{1}{\beta_{triv}'\left(\lambda_*\right)}.\nonumber
\end{eqnarray}
\end{theorem}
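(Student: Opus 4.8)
The plan is to reduce the existence of the bifurcating curve to the local bifurcation theorem already recorded as Theorem~\ref{thm6.1}, then to produce the two eigenvalue branches by the implicit function theorem and compare their leading-order asymptotics. Throughout, note that since $\beta=0$ is in particular a $K$-simple eigenvalue of $T=F_x(\lambda_*,0)$, the operator $T$ is Fredholm of index zero with $\mathcal N(T)=\operatorname{span}\{\psi^*\}$ and $\operatorname{codim}\mathcal R(T)=1$; I fix once and for all a functional $\ell\in Y^*$ normalised so that $\mathcal N(\ell)=\mathcal R(T)$.

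First I would obtain the solution curve. By Definition~\ref{def1}, the hypothesis that $\beta=0$ is an $F_{\lambda x}(\lambda_*,0)$-simple eigenvalue of $T$ says exactly that $F_{\lambda x}(\lambda_*,0)\psi^*\notin\mathcal R(T)$, which is the transversality condition (H2) of Theorem~\ref{thm6.1}. Applying that theorem (with $F$ only $C^2$, so the output curves are $C^1$) yields $\varepsilon>0$ and a curve $(\lambda(s),x(s))$ with $F(\lambda(s),x(s))=0$, $(\lambda(0),x(0))=(\lambda_*,0)$ and $x(s)=s\psi^*+o(s)$; the Lyapunov--Schmidt reduction underlying that theorem also delivers the local uniqueness, namely that every nontrivial zero of $F$ near $(\lambda_*,0)$ lies on this curve. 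This gives the first two displayed assertions of the theorem.

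Next I would construct the eigenvalue pairs by applying the implicit function theorem to the augmented map
\begin{equation}
\Phi(\lambda,x,\beta,\phi):=\big(F_x(\lambda,x)\phi-\beta K\phi,\ \langle \ell_0,\phi-\psi^*\rangle\big),\nonumber
\end{equation}
where $\ell_0\in X^*$ is any functional with $\langle\ell_0,\psi^*\rangle=1$. At $(\lambda_*,0,0,\psi^*)$ the derivative of $\Phi$ in $(\beta,\phi)$ is bijective precisely because $0$ is a $K$-simple eigenvalue of $T$: the non-degeneracy $K\psi^*\notin\mathcal R(T)$ is what makes the bordered operator invertible. Solving $\Phi=0$ along the trivial branch $x=0$ gives smooth $\beta_{triv}(\lambda)$, $\psi_{triv}(\lambda)$, and along $x=x(s)$ gives smooth $\beta(s)$, $\psi(s)$, with $\beta_{triv}(\lambda_*)=\beta(0)=0$ and $\psi_{triv}(\lambda_*)=\psi(0)=\psi^*$. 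Differentiating $F_x(\lambda,0)\psi_{triv}(\lambda)=\beta_{triv}(\lambda)K\psi_{triv}(\lambda)$ at $\lambda_*$ and applying $\ell$ annihilates $\ell\big(T\psi_{triv}'(\lambda_*)\big)=0$ and leaves
\begin{equation}
\beta_{triv}'(\lambda_*)=\frac{\ell\big(F_{\lambda x}(\lambda_*,0)\psi^*\big)}{\ell(K\psi^*)},\nonumber
\end{equation}
which is nonzero since $F_{\lambda x}(\lambda_*,0)\psi^*\notin\mathcal R(T)$ and $K\psi^*\notin\mathcal R(T)$ by the two simplicity hypotheses; this establishes the transversality $\beta_{triv}'(\lambda_*)\neq0$.

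Finally I would derive the ratio limit. Differentiating $F(\lambda(s),x(s))=0$ in $s$ gives $\lambda'(s)F_\lambda(\lambda(s),x(s))+F_x(\lambda(s),x(s))x'(s)=0$, and since $F(\lambda,0)\equiv0$ forces $F_\lambda(\lambda,0)=0$, a Taylor expansion in $x$ yields $F_\lambda(\lambda(s),x(s))=s\,F_{\lambda x}(\lambda_*,0)\psi^*+o(s)$. Writing $x'(s)=\psi(s)+r(s)$ with $r(0)=0$ and invoking $F_x(\lambda(s),x(s))\psi(s)=\beta(s)K\psi(s)$, I apply $\ell$; the remainder $\ell\big(F_x(\lambda(s),x(s))r(s)\big)$ is $O(s^2)$ because $\ell\circ T=0$ while $r(s)$ and $F_x(\lambda(s),x(s))-T$ are both $O(s)$. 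Collecting terms and substituting the formula for $\beta_{triv}'(\lambda_*)$ produces
\begin{equation}
s\lambda'(s)\,\beta_{triv}'(\lambda_*)\,\ell(K\psi^*)=-\beta(s)\,\ell(K\psi^*)+o(s)+O(s\beta(s)),\nonumber
\end{equation}
whence dividing by $\beta(s)\ell(K\psi^*)$ yields $\lim_{s\to0,\beta(s)\neq0} s\lambda'(s)/\beta(s)=-1/\beta_{triv}'(\lambda_*)$. The main obstacle is exactly this last division: one must show that $\beta(s)$ and $s\lambda'(s)$ vanish at the same rate, so that the $o(s)$ and $O(s^2)$ remainders are genuinely negligible against $\beta(s)$ and the quotient stays bounded. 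This comparison of orders, carried out uniformly and only along the values of $s$ where $\beta(s)\neq0$, is the delicate heart of the exchange-of-stability argument.
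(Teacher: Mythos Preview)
The paper does not actually prove Theorem~\ref{thm6.3}: it appears in the Appendix under ``Quoted results'' and is attributed verbatim to Crandall and Rabinowitz \cite{CrandallR1}, with no argument supplied. There is therefore no proof in the paper to compare your proposal against.

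That said, your sketch follows the standard route of the original Crandall--Rabinowitz paper: reduce the existence of the bifurcating curve to Theorem~\ref{thm6.1} via the $F_{\lambda x}$-simplicity hypothesis, produce the two eigenvalue branches by an implicit-function argument on a bordered system (using $K$-simplicity for invertibility), and compare leading-order asymptotics along the two branches. The formula $\beta_{triv}'(\lambda_*)=\ell(F_{\lambda x}(\lambda_*,0)\psi^*)/\ell(K\psi^*)$ and its nonvanishing are correct. Your identification of the delicate point is accurate: in the final step one must justify that the remainder terms are $o(\beta(s))$, not merely $o(s)$, which requires either showing $\beta(s)$ and $s\lambda'(s)$ share the same leading behaviour or, as Crandall--Rabinowitz do, working directly with the Lyapunov--Schmidt reduced equation $g(s,\lambda)=0$ and relating $\beta(s)$ to $g_\lambda$ along the curve. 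Your decomposition $x'(s)=\psi(s)+r(s)$ with $r(s)=O(s)$ also needs the curve to be $C^2$ (or at least $x'$ to be $C^1$), which under the stated $C^2$ hypothesis on $F$ gives only $C^1$ output from Theorem~\ref{thm6.1}; in the original proof this is handled by staying inside the finite-dimensional reduction rather than differentiating $x(s)$ twice.
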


It's worth noting that $\frac{d\beta_{triv}\left(\lambda\right)}{d\lambda}|_{\lambda=\lambda_*}\neq 0$ and $\beta_{triv}\left(\lambda_*\right)=0$, the trivial solution is stable at one side of $\lambda_*$ and is unstable at another side of $\lambda_*$.
Based on the arguments above, the nontrivial solution $x(s)$ is called to be formally stable if $\beta(s)<0$ and unstable if $\beta(s)>0$.
Finally, we quote below the secondary bifurcation theorem that forms the basis of the existence of ripples in Section 4 (cf. \cite{Shearer} for a detailed discussion). The version that we use comes from \cite{MartinM}.

\begin{theorem}(Secondary bifurcation theorem) \label{thm6.4}
\emph{Let $X$ and $Y$ be real Banach spaces and assume $F: \mathbb{R}^2\times X\rightarrow Y$ is $C^{2}$ and there exist closed subspaces $X_{1}$ of $X$ and $Y_{1}$ of $Y$ such that $F:\mathbb{R}^2\times X_{1}\rightarrow Y_{1}$ and}

(H1) $F(\lambda,\beta,0)=0$ for all $(\lambda,\beta)\in \mathbb{R}^2$;

(H2) For some $(\lambda_*, \beta_*)\in \mathbb{R}^2$, $ F_{x}\left(\lambda_*, \beta_*, 0\right)$ is a Fredholm operator with $\mathcal{N} \left( F_{x}\left(\lambda_*, \beta_*, 0\right)\right)$ and $Y /\mathcal{R}\left(F_{x}\left(\lambda_*, \beta_*, 0\right)\right)$ are $1$-dimensional and
the null space generated by $x_1$, and the transversality condition
\begin{equation}
F_{\lambda,x}\left(\lambda_*,\beta_*, 0\right)[1,x_1]\not\in\mathcal{R}\left(F_{x}\left(\lambda_*,\beta_*, 0\right)\right)\nonumber
\end{equation}
holds;

(H3) There exist closed spaces $X_{2}, Y_{2}$ such that $X=X_{1}\oplus X_{2}$ and $Y=Y_{1}\oplus Y_{2}$;

(H4) $\mathcal{N}(F_{x}(\lambda_*,\beta_*,0))\cap X_{i}=span\{x_{i}\}$ for $i=1,2$;

(H5) There are $0\neq \psi_{i}\in Y$ with $span\{\psi_{i}\}\oplus\left(Y_{i}\cap \mathcal{R}(F_{x}(\lambda_*,\beta_*,0))\right)=Y_{i}$ for $i=1,2$;

(H6) $F_{x}(\lambda_*, \beta_*,0)X_{2}\subset Y_{2}$;

(H7) $F_{\lambda,x}(\lambda_*, \beta_*,0)[1,x_{i}]\not\in\mathcal{R}\left(F_{x}\left(\lambda_*,\beta_*, 0\right)\right)$ for $i=1,2$;

(H8) \begin{equation}  \nonumber\\
\left|
\begin{array}{cc}
\langle F_{\lambda x}(\lambda_*,\beta_*,0)[1,x_{1}]|\psi'_{1} \rangle  ~&~  \langle F_{\beta x}(\lambda_*,\beta_*,0)[1,x_{1}]|\psi'_{1} \rangle \\
~ & ~\\
\langle F_{\lambda x}(\lambda_*,\beta_*,0)[1,x_{2}]|\psi'_{2} \rangle   ~&~ \langle F_{\beta x}(\lambda_*,\beta_*,0)[1,x_{2}]|\psi'_{2} \rangle  \\
\end{array}
\right|\neq0
\end{equation}
and
\begin{equation}  \nonumber\\
\left|
\begin{array}{cc}
\langle F_{\lambda x}(\lambda_*,\beta_*,0)[1,x_{1}]|\psi'_{1} \rangle   ~&~  \langle F_{x x}(\lambda_*,\beta_*,0)[x_{1},x_{1}]|\psi'_{1} \rangle \\
~ & ~\\
\langle F_{\lambda x}(\lambda_*,\beta_*,0)[1,x_{2}]|\psi'_{2} \rangle   ~&~ \langle F_{x x}(\lambda_*,\beta_*,0)[x_{1},x_{2}]|\psi'_{2} \rangle  \\
\end{array}
\right|\neq0
\end{equation}
hold, where $\langle\psi_i |\psi'_{j}\rangle=\delta_{ij} $ and $\mathcal{N}(\psi'_{i})=\mathcal{R}(F_{x}(\lambda_*,\beta_*,0))$ for $i=1,2$ with $\langle\cdot|\cdot\rangle$ denoting the duality pairing on $Y\times Y'$.

Then, there exists an interval $\mathcal{I}$ containing zero and smooth functions $\chi,\kappa: \mathcal{I}\times \mathcal{I}\rightarrow \mathbb{R}$ such that $\kappa(0,0)\neq0$, and for $\beta=\beta_*+\theta$ with $0\neq\theta\in \mathcal{I}$, the curve
$$
\mathcal{S}_{\beta}:=\{ (\lambda(\beta-\beta_*,s),\beta,x(\beta-\beta_*,s)): s\in \mathcal{I}\}
$$
with
$$
\lambda(\theta,s):=\lambda_*+\theta\chi(\theta,s),
$$
$$
x(\theta,s):=\theta\kappa(\theta,s)x_{1}+\theta sx_{2}+z(\lambda_*+\theta\chi(\theta,s),\beta,\theta\kappa(\theta,s)x_{1}+\theta sx_{2})
$$
is a secondary branch of solutions intersecting the Primary branch $\mathcal{P}$ of bifurcation at $(\lambda(\theta,0),x(\theta,0))$ whereby $x(\theta,0)\neq 0$.
\end{theorem}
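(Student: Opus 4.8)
The plan is to reduce the full equation $F(\lambda,\beta,x)=0$ to a two–dimensional bifurcation system by a Lyapunov--Schmidt procedure adapted to the splittings of (H3), and then to exploit the compatibility $F:\mathbb{R}^2\times X_1\to Y_1$ to factor out the primary branch, leaving a reduced system whose secondary solutions are produced by the implicit function theorem. First I would fix the projections: by (H2), (H4), (H5) the operator $L:=F_x(\lambda_*,\beta_*,0)$ is Fredholm of index zero on $X\to Y$ with $\mathcal{N}(L)=\mathrm{span}\{x_1,x_2\}$ and range complement $\mathrm{span}\{\psi_1,\psi_2\}$, while the functionals $\psi_1',\psi_2'$ of (H8) satisfy $\langle\psi_i\,|\,\psi_j'\rangle=\delta_{ij}$ and $\mathcal{N}(\psi_i')=\mathcal{R}(L)$. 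Writing $x=t_1x_1+t_2x_2+z$ with $z$ in a fixed complement $Z$ of $\mathcal{N}(L)$, I would solve the auxiliary equation $(I-P)F=0$ (where $P$ projects onto $\mathrm{span}\{\psi_1,\psi_2\}$ along $\mathcal{R}(L)$) for $z=z(\lambda,\beta,t_1,t_2)$ by the implicit function theorem, since $L|_Z:Z\to\mathcal{R}(L)$ is an isomorphism; moreover $z$ is of higher order in $(t_1,t_2,\lambda-\lambda_*,\beta-\beta_*)$ because $F$ and all its pure $\lambda,\beta$ derivatives vanish at $x=0$ by (H1). The bifurcation equations then read $\Phi_i(\lambda,\beta,t_1,t_2):=\langle F(\lambda,\beta,t_1x_1+t_2x_2+z)\,|\,\psi_i'\rangle=0$ for $i=1,2$.

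The decisive structural step is to show that $\Phi_2$ vanishes on $\{t_2=0\}$. Indeed, by (H4) $x_1\in X_1$, and since (H6) together with the compatibility make $P$ and the reduction preserve the $Y_1\oplus Y_2$ splitting, the auxiliary solution $z$ stays in $X_1$ when $t_2=0$; hence $x\in X_1$ and $F\in Y_1$, and because $\psi_2'$ annihilates both $\mathcal{R}(L)$ and $Y_1$ by (H5) we get $\Phi_2(\lambda,\beta,t_1,0)\equiv0$. A division lemma (valid since $F$ is $C^2$, so $\Phi_2$ is $C^2$) yields a $C^1$ function $\Psi$ with $\Phi_2=t_2\,\Psi$. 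Thus the system $\{\Phi_1=0,\ \Phi_2=0\}$ decomposes: the locus $\{t_2=0,\ \Phi_1=0\}$ is precisely the primary branch $\mathcal{P}$ obtained by applying Theorem \ref{thm6.1} to $F:\mathbb{R}\times X_1\to Y_1$ (its hypotheses being exactly (H2) restricted to $X_1$), while the \emph{secondary} branch is the zero set of the reduced system $\{\Phi_1=0,\ \Psi=0\}$ with $t_2\neq0$.

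To solve the reduced system I would introduce the scaling dictated by the conclusion, namely $\beta=\beta_*+\theta$, $t_1=\theta\kappa$, $t_2=\theta s$, $\lambda=\lambda_*+\theta\chi$, and Taylor–expand $F$ about $(\lambda_*,\beta_*,0)$. Because $F(\cdot,\cdot,0)\equiv0$, the expansion begins with $L\,x$ (which lies in $\mathcal{R}(L)=\mathcal{N}(\psi_i')$ and so drops from $\Phi_i$) and continues with the bilinear terms $F_{\lambda x},F_{\beta x}$ and the quadratic term $F_{xx}$. Under the scaling $\Phi_1$ is of order $\theta^2$ and $\Psi$ of order $\theta$; dividing by $\theta^2$ and $\theta$ respectively gives a system $G(\theta,s,\chi,\kappa)=0$ extending $C^1$ to $\theta=0$. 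At $(\theta,s)=(0,0)$ the leading system is linear in $(\chi,\kappa)$ with coefficients built from the pairings $\langle F_{\lambda x}[1,x_i]|\psi_i'\rangle$, $\langle F_{\beta x}[1,x_i]|\psi_i'\rangle$ and $\langle F_{xx}[x_1,x_i]|\psi_i'\rangle$; here (H7) secures transversality in both kernel directions, and the two $2\times2$ determinants of (H8) are exactly the nondegeneracy conditions that (i) produce a leading–order solution with $\kappa\neq0$ at $s=0$ and (ii) render the Jacobian $\partial_{(\chi,\kappa)}G$ invertible there. The implicit function theorem then furnishes smooth $\chi(\theta,s),\kappa(\theta,s)$ with $\kappa(0,0)\neq0$; substituting back and reconstituting $z$ yields the parametrized secondary curve $\mathcal{S}_\beta$ in the stated form, meeting $\mathcal{P}$ at $s=0$ (where $t_2=0$ but $x=\theta\kappa x_1+z\neq0$) and detaching for $s\neq0$.

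The main obstacle I expect is the coupling of the second and third steps: cleanly extracting the factor $t_2$ from $\Phi_2$ — which requires justifying that the auxiliary solution $z$ respects the splitting so that $\Phi_2|_{t_2=0}\equiv0$, and that the division produces a genuinely $C^1$ remainder $\Psi$ — and then verifying, after the careful bookkeeping of orders in $\theta$, that the limiting reduced system at $\theta=0$ is nonsingular. All hypotheses conspire precisely at this point: (H3)--(H6) underwrite the factorization and the primary branch, (H7) gives the transversality for both kernel directions, and the two determinants of (H8) are exactly the invertibility conditions that let the implicit function theorem close the argument.
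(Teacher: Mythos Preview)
The paper does not prove Theorem~\ref{thm6.4}: it is stated in the Appendix as a quoted result, with the explicit remark that it is due to Shearer \cite{Shearer} (in the formulation of \cite{MartinM}), and the reader is referred to those sources for a detailed discussion. There is therefore no ``paper's own proof'' to compare your proposal against.

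That said, your outline is essentially the standard strategy used in Shearer's original work: a Lyapunov--Schmidt reduction onto the two-dimensional kernel, exploitation of the invariance $F:\mathbb{R}^2\times X_1\to Y_1$ (together with (H3)--(H6)) to force $\Phi_2|_{t_2=0}\equiv 0$ and hence factor $\Phi_2=t_2\Psi$, and finally a rescaling $(t_1,t_2,\lambda-\lambda_*)=\theta(\kappa,s,\chi)$ so that the reduced system extends smoothly to $\theta=0$, where the two determinants in (H8) are precisely what make the implicit function theorem apply in $(\chi,\kappa)$. The one place where your sketch is slightly loose is the claim that $\psi_2'$ annihilates all of $Y_1$: what (H5) actually gives is that $\psi_2\in Y_2$ complements $Y_2\cap\mathcal{R}(L)$ inside $Y_2$, and one needs the compatibility $F(\mathbb{R}^2\times X_1)\subset Y_1$ together with $L X_2\subset Y_2$ from (H6) to conclude that $\mathcal{R}(L)$ itself respects the splitting, so that $\psi_2'$ (chosen with $\mathcal{N}(\psi_2')\supset\mathcal{R}(L)$ and $\langle\psi_1|\psi_2'\rangle=0$) can indeed be taken to vanish on $Y_1$. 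With that point tightened, your argument matches the approach in the cited references.
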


\section*{Acknowledgments}
Dai was supported by National Natural Science Foundation of China (No.
12371110). Xu was supported by the Postdoctoral Science Foundation of China (2023M731381). Zhang was supported by National Natural Science Foundation of
China (No. 12301133), the Postdoctoral Science Foundation of China (2023M741441) and Jiangsu Education Department (No. 23KJB110007).

\section*{Data Availability Statements}
Data sharing not applicable to this article as no datasets were generated or analysed during the current study.

\section*{Conflict of interest}
The authors declare that they have no conflict of interest.

\end{document}